\newtheorem{theorem}{Theorem}
\newtheorem{corollary}{Corollary}
\newtheorem{definition}{Definition}
\newtheorem{lemma}{Lemma}
\newtheorem{remark}{Remark}
\newcommand{\diver}{{{\rm{div}\,}}}
\newcommand{\grad}{{{\rm grad\,}}}
\DeclareMathOperator{\Hess}{{{\rm Hess}\,}}
\DeclareMathOperator{\hess}{{{\rm hess}\,}}
\DeclareMathOperator{\tr}{Tr}
\DeclareMathOperator{\Ll}{L}
\DeclareMathOperator{\Pp}{P}
\begin{document}

\title{Maximum principle for  semi-elliptic trace operators and  geometric applications}
{\author{G. Pacelli Bessa }
\address{Departamento de Matem\'{a}tica\\Universidade Federal do Cear\'{a}-UFC\\
60455-760, Fortaleza-Brazil} \email{bessa@mat.ufc.br} \thanks{Research partially supported by CAPES-CNPq-Brazil}}
{\author{Leandro F. Pessoa}
\address{Departamento de Matem\'{a}tica\\Universidade Federal do Cear\'{a}-UFC\\
60455-760, Fortaleza-Brazil} \email{leandrofreitasp@yahoo.com.br} \thanks{Research partially supported  by CAPES}}
\date{\today}
\keywords{Trace operators, weak maximum principle, Omori-Yau maximum principle, higher order mean curvature estimates.}
\footnote{{\it 2010 Mathematics Subject Classification:} Primary 53C40, 53C42 ; Secondary 35B50}

\maketitle

\begin{abstract} Based on ideas of L. Al\'{\i}as, D. Impera and M. Rigoli developed in \cite{alias-impera-rigoli}, we  present a fairly general  weak/Omori-Yau maximum principle for trace operators. We apply this version of maximum principle to generalize several higher order mean curvature estimates and  to  give an extension of  Alias-Impera-Rigoli Slice Theorem of \cite[Thm. 16 \& 21]{alias-impera-rigoli}, see Theorems \ref{thm5}, \ref{thm6}.
\end{abstract}

\section{Introduction} The theory of minimal and constant mean curvature hypersurfaces of  product spaces $N\times \mathbb{R}$, where $N$ is a complete Riemannian manifold,
 has been developed into a  rich
theory  \cite{meeks-rosenberg1}, \cite{rosenberg}, \cite{rosenberg-book}   yielding a wealth of examples
 and  results, see for instance  \cite{abresch-Rosenberg}, \cite{alencar-doCarmo-Fernadez-Tribuzy}, \cite{alencar-doCarmo-Tribuzzy}, \cite{alencar-doCarmo-Tribuzzy2}, \cite{alias-bessa-dajczer-MathAnn}, \cite{alias-bessa-montenegro},  \cite{alias-dajczer-3}, \cite{bessa-costa}, \cite{bessa-montenegro2}, \cite{doCarmo-fernandez}, \cite{earp} \cite{elbert-rosenberg}, \cite{hauswirth}, \cite{hoffman-lira-Rosenberg},  \cite{nelli-rosenberg1} \cite{nelli-rosenberg2}, \cite{nelli-rosenberg3} and the references therein.

Recently, the theory  minimal and constant mean curvature hypersurfaces started to be  developed in more general spaces, as in  the work of S. Montiel \cite{montiel-indiana} and Al\'{\i}as-Dajczer \cite{alias-dajczer-1}, \cite{alias-dajczer-2} where they  studied constant mean curvature hypersurfaces in warped product manifolds $M^{n+1}=\mathbb{R}\times_{\varrho}\mathbb{P}^{n}$, where $\mathbb{P}^{n}$ is a complete Riemannian manifold and $\varrho \colon \mathbb{R}\to \mathbb{R}_{+}$ is a smooth warping function. Those studies were further extended  by Al\'{i}as, Dajczer and Rigoli \cite{alias-dajczer-rigoli} and Al\'{i}as, Impera and Rigoli \cite{alias-impera-rigoli} to include constant higher order mean curvature  hypersurfaces in warped product manifolds and in general setting by Albanese, Al\'{\i}as and Rigoli  \cite{albanese-alias-rigoli}.

In this paper we  give a small contribution to the theory proving appropriate extensions the results of \cite{alias-impera-rigoli}. We start in Section \ref{sec1}
  presenting general conditions for the validity of the weak  maximum principle for a fairly general class of semi-elliptic trace operators \ref{operator}, see Theorem \ref{Thm1}. These operators and versions of Omori-Yau maximum principle were considered by Alias, Impera and Rigoli \cite{alias-impera-rigoli} and by Hong and Sung \cite{hong-sung} under slightly more restrictive conditions.
  Then, we derive few geometric conditions on a manifold  that  guarantee that Theorem \ref{Thm1} applies, see Corollary \ref{corollary1} and Theorem \ref{corollary2}.  In section \ref{sec2} we consider   the $L_r$ operators and  prove several  higher order mean curvature estimates for hypersurfaces immersed into warped product spaces $M^{n+1}=\mathbb{R}\times_{\varrho}\mathbb{P}^{n}$, see Theorems \ref{Thm2}, \ref{Thm3}. In section \ref{sec3} we extend the  Slice Theorem of Al\'{i}as-Impera-Rigoli \cite[Thms. 16 \& 21]{alias-impera-rigoli}, see Theorems \ref{thm5}, \ref{thm6}.

 \section{Maximum Principle for Trace Operators}\label{sec1}Following the terminology introduced in \cite{prs-memoirs} we say that the Omori-Yau maximum principle for the Laplacian holds on $M$ if for any given $u\in C^{2}(M)$ with $u^{\ast}=\sup_{M}u<+\infty$, there exists a sequence of points $x_{k}\in M$, depending on $M$ and  $u$,  such that
\begin{eqnarray}\label{eq-omori-2}
\lim_{k\to +\infty}u(x_{k}) =u^{\ast},& \vert \grad u\vert  (x_{k}) < \displaystyle \frac{1}{k},&  \triangle u(x_{k}) <  \displaystyle \frac{1}{k}.
\end{eqnarray} Likewise, the Omori-Yau maximum principle \textit{for the Hessian} is said to hold on $M$ if for any given $u\in C^{2}(M)$ with $u^{\ast}=\sup_{M}u<+\infty$, there exists a sequence of points $x_{k}\in M$, depending on $M$ and on $u$,  such that \begin{eqnarray}\label{eq-omori}
\lim_{k\to +\infty}u(x_{k}) =u^{\ast},& \vert \grad u\vert  (x_{k}) < \displaystyle \frac{1}{k},&  \Hess u(x_{k})(X,X) <  \displaystyle \frac{1}{k}\cdot \vert  X \vert^{2},
\end{eqnarray}for every $X\in T_{x_k}M$.  Accordingly,  the classical results of Omori \cite{omori} and Yau \cite{yau-comm-pure-75} can be stated saying that the Omori-Yau maximum principle for the Laplacian  holds on  Riemannian manifold with Ricci curvature bounded from below. The importance of the Omori-Yau maximum principle lies on its  wide range of  applications in geometry and analysis. Applications  that goes  from the generalized Schwarz lemma \cite{yau2} to the study of the group of conformal diffeomorphism of a manifold \cite{PRS-PAMS}, from  curvature estimates on submanifolds \cite{albanese-alias-rigoli}, \cite{alias-bessa-dajczer-MathAnn}, \cite{alias-bessa-montenegro} to Calabi conjectures on minimal hypersurfaces \cite{jorge-xavier-bms}, \cite{omori}. The essence of the Omori-Yau maximum principle was captured by Pigola, Rigoli and Setti in Theorem 1.9 of   \cite{prs-memoirs}  whose corollary is the following result: The Omori-Yau maximum principle holds on every Riemannian manifold $M$ with Ricci curvature satisfying $Ric_{M}(\grad \rho, \grad \rho) \geq - C^{2}G(\rho) $ where $\rho $ is the distance function on $M$ to a point, $C$ is positive constant and  $ G:[0,+\infty)\to [0, \infty) $ is a smooth function  satisfying $ G(0) > 0, \,  G'(t) \geq 0$, $ \displaystyle\int_{0}^{+\infty}\frac{ds}{\sqrt{G(s)}} = +\infty$ and $\displaystyle\limsup_{t \rightarrow +\infty}\frac{tG(\sqrt{t})}{G(t)} < +\infty$.

In most applications of the Omori-Yau maximum principle, the  condition $\vert \grad u\vert (x_k)< 1/k$ is redundant, which led to the following definition.
\begin{definition}[Pigola-Rigoli-Setti] The weak  maximum principle holds on a Riemannian manifold
$M$ if for every  $u\in C^{2}(M)$ with  $u^{\ast}=\sup_{M}u<+\infty$, there exists a sequence of points $x_{k}\in M$,   such that
\begin{eqnarray}\label{eq-omori-3}
\lim_{k\to +\infty}u(x_{k}) =u^{\ast},&   \triangle u(x_{k}) <  \displaystyle \frac{1}{k}.
\end{eqnarray}\end{definition}
 This apparently simple minded  definition   proved to be surprisingly deep. For instance,  it  has been proven in  \cite{PRS-PAMS},  that the weak  maximum principle for the  Laplacian is equivalent to the stochastic completeness of the diffusion process associated
to $\triangle$.

The Omori-Yau/weak maximum principle  can be considered  for  differential elliptic operators other than the Laplacian,  like  the $\phi$-Laplacian \cite{prs-revista-2006}, the weighted Laplacian $\triangle_{f}=e^{f} \diver (e^{-f}\grad)$ \cite{bps-revista}, \cite{barnabe-leandro} and semi-elliptic  trace operators $L=Tr (P \circ \hess )$  considered in  \cite{alias-impera-rigoli}, \cite{pessoa-lima1} and in  \cite{bjlm-abc},   where $P\colon TM\to TM$ is a positive semi-definite symmetric tensor on $TM$ and for each $u \in C^{2}(M)$,  $\hess u\colon TM \to TM$ is a symmetric  operator defined by $\hess u (X)=\nabla_{X}\grad u$ for every $X\in TM$. Here $\nabla$ be the Levi-Civita connection of $M$.
In this paper we are going to consider the trace operator \begin{equation}\label{operator}Lu=Tr (P \circ \hess u ) + \langle V,\grad u \rangle \end{equation} with $\limsup_{x\to \infty}\vert V\vert (x)< +\infty$, and prove an Omori-Yau maximum principle in the same spirit of Theorem 1.9 of   \cite{prs-memoirs}. Then we prove some geometric applications that extends those of \cite{alias-impera-rigoli}. In \cite{hong-sung},  K. Hong and C. Sung considered  the same trace operator and proved an Omori-Yau maximum principle but their proof required  the  stronger condition  $\sup_{M}Tr (P)+ \sup \vert V\vert <\infty.$
 It should  be pointed out that Albanese, Alias and Rigoli  in \cite{albanese-alias-rigoli} recently proved  an all general Omori-Yau maximum principle for trace operators of the form $ Lu=Tr (P \circ \hess u ) + \diver P (\grad u)+ \langle V,\grad u \rangle$. However,  the main purpose of this paper is to extend the  geometric applications involving the operator \eqref{operator} proved in \cite{alias-impera-rigoli} and for the sake of completeness we keep the proof of our version of the Omori-Yau maximum principle, besides it is very simple.

 Our first result is the following extension of \cite[Thm. 1]{alias-impera-rigoli}.

\begin{theorem}\label{Thm1}
Let $ (M,\langle,\rangle) $ be a complete Riemannian manifold. Consider a semi-elliptic operator $ \Ll =  \tr(\Pp\circ\hess (\cdot)) + \langle V, \grad (\cdot) \rangle $, where $ \Pp : TM \rightarrow TM $ is a positive semi-definite symmetric tensor and $ V $ satisfies $ \limsup_{x \to \infty} \vert V\vert (x) < +\infty $. Suppose that exists a non-negative function $\gamma\in C^2(M) $ satisfying:
\begin{enumerate}
\item[i)] $ \gamma(x) \rightarrow + \infty $ as $ x \rightarrow \infty $,
\item[ii)] $ \exists A > 0 $ such that $\displaystyle{\vert \nabla \gamma\vert \leq A \sqrt{G(\gamma)}\cdot\left(\int_{0}^{\gamma}\frac{1}{\sqrt{G(s)}}ds+1\right)}$ off a compact set,
\item[iii)] $ \exists B > 0 $ such that  $ \displaystyle{\tr(\Pp\circ\hess \gamma) \leq B \sqrt{G(\gamma)}\cdot \left(\int_{0}^{\gamma}\frac{ds}{\sqrt{G(s)}} + 1\right)} $ off a compact set.
\item[iv)]Where $ G : [0,\infty) \rightarrow [0,\infty) $ is such that:
$\begin{array}{llll} G(0) > 0  &  G'(t) \geq 0  &{\rm and}& \displaystyle{G(t)^{-\frac{1}{2}} \not\in L^{1}(+\infty)} .\end{array}$
\end{enumerate}
Then given any function $ u \in C^2(M) $ that satisfies
\begin{eqnarray}\label{condictionu}
\displaystyle{\lim_{x \rightarrow \infty}\frac{u(x)}{\varphi(\gamma(x))}} = 0,
\end{eqnarray}
where
\begin{eqnarray}\label{defvarphi}
\displaystyle{\varphi(t) = \ln\left(\int_{0}^{t}\frac{ds}{\sqrt{G(s)}} + 1\right)},
\end{eqnarray}
there exists a sequence $ \{x_k\}_{k \in \mathbb{N}} \subset M $ satisfying:
\begin{eqnarray*}
(a) \ \ \vert \nabla u\vert(x_k) < \frac{1}{j} \qquad \mbox{and} \qquad (b) \ \ \Ll u(x_k) < \frac{1}{j} .
\end{eqnarray*}

If instead of \eqref{condictionu} we suppose that $ u $ is bounded above  we have  that
\begin{eqnarray*}
(c) \lim_{k \rightarrow +\infty}u(x_k) = \sup u .
\end{eqnarray*}
\end{theorem}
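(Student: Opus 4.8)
The plan is to run the Pigola--Rigoli--Setti comparison scheme with the single comparison function $\psi:=\varphi\circ\gamma$. I would begin by recording the elementary properties of $\varphi$. Writing $f(t):=\int_{0}^{t}G(s)^{-1/2}\,ds$, so that $\varphi=\ln(f+1)$ by \eqref{defvarphi}, one computes $\varphi'=\dfrac{1}{\sqrt{G}\,(f+1)}>0$, and since $f''=-\dfrac{G'}{2\,G^{3/2}}\le 0$ by hypothesis (iv),
\[
\varphi''=\frac{f''}{f+1}-(\varphi')^{2}<0 .
\]
Moreover $\varphi(0)=0$, $\varphi$ is increasing, and $\varphi(t)\to+\infty$ as $t\to+\infty$ because $G(t)^{-1/2}\not\in L^{1}(+\infty)$; together with (i) this gives $\psi\ge 0$ on $M$ and $\psi(x)\to+\infty$ as $x\to\infty$.

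Next I would compute $Lu$ on $\psi$. From $\nabla\psi=\varphi'(\gamma)\nabla\gamma$ and $\hess\psi(X)=\varphi'(\gamma)\hess\gamma(X)+\varphi''(\gamma)\langle X,\nabla\gamma\rangle\nabla\gamma$ one obtains
\[
L\psi=\varphi'(\gamma)\big(\tr(P\circ\hess\gamma)+\langle V,\nabla\gamma\rangle\big)+\varphi''(\gamma)\,\langle P\nabla\gamma,\nabla\gamma\rangle .
\]
The decisive observation --- and the reason for composing $\gamma$ with the logarithm rather than with $f$ or with $\gamma$ itself --- is that $\varphi''<0$ and $P\ge 0$ force the last summand to be $\le 0$, so it may be discarded outright; this is precisely what eliminates any need for a bound on the size of $P$, the hypothesis that made the Hong--Sung version more restrictive. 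For the remaining terms, hypothesis (iii) gives $\varphi'(\gamma)\,\tr(P\circ\hess\gamma)\le B$ off a compact set, while (ii), the Cauchy--Schwarz inequality, and $V_{\infty}:=\limsup_{x\to\infty}|V|(x)<+\infty$ give $\varphi'(\gamma)\,|\langle V,\nabla\gamma\rangle|\le A(V_{\infty}+1)$ off a compact set; similarly (ii) gives $|\nabla\psi|\le A$ off a compact set. Since $L\psi$ and $|\nabla\psi|$ are continuous on $M$, these extend to global bounds $L\psi\le C$ and $|\nabla\psi|\le A'$ for suitable constants $C,A'$.

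Now the comparison argument. Fix $\varepsilon>0$. If \eqref{condictionu} holds then $u-\varepsilon\psi=\psi\big(\tfrac{u}{\psi}-\varepsilon\big)\to-\infty$ as $x\to\infty$; if instead $u$ is bounded above, the same follows from $u-\varepsilon\psi\le\sup_{M}u-\varepsilon\psi$ together with $\psi\to+\infty$. In either case $u-\varepsilon\psi$ attains its supremum at some point $p_{\varepsilon}\in M$. There we have $\nabla u(p_{\varepsilon})=\varepsilon\nabla\psi(p_{\varepsilon})$ and, as symmetric operators, $\hess u(p_{\varepsilon})\le\varepsilon\,\hess\psi(p_{\varepsilon})$; since $P\ge 0$, the functional $T\mapsto\tr(P\circ T)$ is monotone with respect to the semidefinite order, and adding $\langle V,\nabla u\rangle(p_{\varepsilon})=\varepsilon\langle V,\nabla\psi\rangle(p_{\varepsilon})$ yields
\[
Lu(p_{\varepsilon})\le\varepsilon\,L\psi(p_{\varepsilon})\le\varepsilon\,C,\qquad |\nabla u(p_{\varepsilon})|=\varepsilon\,|\nabla\psi(p_{\varepsilon})|\le\varepsilon\,A' .
\]
Picking $\varepsilon=\varepsilon_{k}\downarrow 0$ with $\varepsilon_{k}\max\{A',C\}<1/k$ and setting $x_{k}:=p_{\varepsilon_{k}}$ produces the sequence in (a)--(b).

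Finally, assume $u$ is bounded above with $u^{\ast}=\sup_{M}u$. Given $\delta>0$, choose $q_{\delta}$ with $u(q_{\delta})>u^{\ast}-\delta$; using $\psi\ge 0$,
\[
u^{\ast}-\delta-\varepsilon\psi(q_{\delta})<u(q_{\delta})-\varepsilon\psi(q_{\delta})\le\sup_{M}(u-\varepsilon\psi)=u(p_{\varepsilon})-\varepsilon\psi(p_{\varepsilon})\le u(p_{\varepsilon})\le u^{\ast}.
\]
Letting $\varepsilon\to 0$ gives $\liminf_{k}u(x_{k})\ge u^{\ast}-\delta$, and then $\delta\to 0$ gives (c). I expect the only genuinely delicate point to be the sign of $\varphi''$ and the consequent deletion of the $\langle P\nabla\gamma,\nabla\gamma\rangle$ term; the remainder is the standard Omori--Yau machinery, so the work is mostly bookkeeping of constants off the relevant compact set.
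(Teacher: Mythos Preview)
Your proof is correct and follows essentially the same route as the paper: both introduce the auxiliary function $u-\varepsilon\,\varphi(\gamma)$, exploit the crucial sign $\varphi''\le 0$ (together with $P\ge 0$) to discard the $\langle P\nabla\gamma,\nabla\gamma\rangle$ term, and then use hypotheses (ii)--(iii) to see that $\varphi'(\gamma)$ exactly cancels the allowed growth of $|\nabla\gamma|$ and $\tr(P\circ\hess\gamma)$. The only organizational difference is that you first establish global bounds $L\psi\le C$, $|\nabla\psi|\le A'$ and then invoke $Lu(p_\varepsilon)\le\varepsilon L\psi(p_\varepsilon)$, whereas the paper expands everything pointwise at $x_k$ and assumes without loss of generality that the sequence leaves every compact set; you also supply the short argument for (c), which the paper omits.
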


\subsection{Proof of Theorem \ref{Thm1}}

The proof of Theorem \ref{Thm1}  we will follow the same steps of the proof of  the maximum principle of Omori-Yau in the case of the Laplacian and the f-Laplacian presented in \cite{barnabe-leandro}. Thus, we only prove the item (b).
Consider the following family of functions
\begin{eqnarray*}
f_{k}(x) = u(x) - \varepsilon_{k} \varphi(\gamma(x)),
\end{eqnarray*}
where $ \varphi $ is defined in \eqref{defvarphi} and $ \varepsilon_{k} \rightarrow 0^{+} $ when $ k \rightarrow + \infty $. Observe  that the condition  \eqref{condictionu} implies that $ f_{k} $ reaches a local maximum, say at $ x_{k} \in M $. Suppose that the sequence $ \{x_k\}_{k \in \mathbb{N}} $ diverges, (leaves any compact subset of $ M$) otherwise we have nothing to prove.

Using the fact that $ x_{k} $ be the point of maximum to $ f_{k} $ we infer that
\begin{eqnarray}\label{expression0}
0& =& \grad u(x_k) - \varepsilon_{k}\varphi{'}(\gamma(x_k))\grad \gamma (x_k)
\end{eqnarray}
and for all $v\in T_{k}M$ we have
\begin{eqnarray}\label{expression1}
0& \geq &\Hess u(x_k)(v,v) - \varepsilon_{k}\left[\varphi{'}(\gamma(x_k))\Hess \gamma(x_k)(v,v) - \varphi{''}(\gamma(x_k))\langle \grad \gamma(x_k),v\rangle^{2} \right].
\end{eqnarray}
Calculating $ \varphi'$ and $ \varphi'' $ we obtain
\begin{eqnarray}\label{varphilinha}
\varphi^{'}(t)= \left\{\left(\int_{0}^{t}\frac{ds}{\sqrt{G(s)}} + 1\right)\sqrt{G(t)}\right\}^{-1}
\end{eqnarray}
and
\begin{eqnarray}\label{varphiduaslinhas}
\varphi^{''}(t) &=& - \left\{\sqrt{G(t)} \left(\int_{0}^{t}\frac{1}{\sqrt{G(s)}}ds +1  \right) \right\}^{-2}\left\{\frac{G'(t)}{2\sqrt{G(t)}} \left( \int_{0}^{t}\frac{1}{\sqrt{G(s)}}ds+1\right)+1  \right\}\leq 0
\end{eqnarray}
Taking \eqref{varphilinha} into \eqref{expression0}, we have by the hypothesis ii)
\begin{eqnarray}\label{graduxn}
\vert \grad u\vert(x_k) \leq \varepsilon_{k}\vert \varphi{'}(\gamma(x_k))\vert \vert \grad \gamma\vert(x_k) \leq \varepsilon_{k}.
\end{eqnarray}
Taking  \eqref{varphiduaslinhas} into \eqref{expression1}, we get
\begin{eqnarray}\label{expression2}
\Hess u(x_{k})(v,v) \leq \varepsilon_{k}\varphi{'}(\gamma(x_k))\Hess \gamma(x_k)(v,v).
\end{eqnarray}

Choose a basis of eigenvectors $ \{v_{1},...,v_{n}\} \subset T_{x_k} M $  of  $ P(x_k) $, corresponding to the  eigenvalues $ \lambda_{j}(x_k) = \langle P(x_k)v_{j},v_{j} \rangle \geq 0 $, with $ 1 \leq j \leq n={\rm dim}(M) $. Therefore, by the inequality \eqref{expression2} it follows that
\begin{eqnarray*}
\langle P(x_k)\hess u(x_k)v_{j},v_{j}\rangle &=& \lambda_{j}(x_k)\Hess u(x_k)(v_{j},v_{j}) \\
& \leq & \varepsilon_{k}\varphi{'}(\gamma(x_k))\langle P(x_k)\hess \gamma(x_k)v_{j},v_{j} \rangle
\end{eqnarray*}
Applying the trace on both sides of the inequality above and using \eqref{varphilinha} with the hypothesis $iii)$, we have
\begin{eqnarray}
\tr(\Pp\circ\hess u(x_k)) \leq \varepsilon_{k}\varphi{'}(\gamma(x_k))\tr(\Pp\circ\hess \gamma(x_k)).
\end{eqnarray}
Therefore, by the item (a) and that $ \limsup_{x \to \infty}\vert V\vert(x) < +\infty $ we have
\begin{eqnarray*}
\Ll u(x_k) &=& \tr(\Pp\circ\hess u(x_k)) + \langle V(x_k),\grad u(x_k)\rangle \\
& \leq & \varepsilon_{k}\varphi{'}(\gamma(x_k))\tr(\Pp\circ\hess \gamma(x_k)) + D\vert \grad u(x_k)\vert \\
& \leq & (D+1)\varepsilon_{k}.
\end{eqnarray*}
This finish the proof of Theorem \ref{Thm1}.
\begin{remark} \label{remark1} The   Theorem \ref{Thm1} can be proved substituting the conditions ii) and iii)  by the apparently more general conditions: \begin{itemize}\item[i.] $\displaystyle{\vert \nabla \gamma\vert \leq A\prod_{j=1}^{\ell}\left[\ln^{(j)}\left(\int_{0}^{t}\frac{ds}{\sqrt{G(s)}} + 1\right)+1\right]\left(\int_{0}^{t}\frac{ds}{\sqrt{G(s)}} + 1\right)\sqrt{G(t)}} $ and \item[ii.] $ \displaystyle{\tr(\Pp\circ\hess \gamma) \leq B\prod_{j=1}^{\ell}\left[\ln^{(j)}\left(\int_{0}^{t}\frac{ds}{\sqrt{G(s)}} + 1\right)+1\right]\left(\int_{0}^{t}\frac{ds}{\sqrt{G(s)}} + 1\right)\sqrt{G(t)}} $
\end{itemize} respectively. Just consider in the proof $\varphi (t) = \log^{(\ell+1)} \left(\int_{0}^{t}\frac{ds}{\sqrt{G(s)}} + 1\right)$.
\end{remark}

\begin{corollary}\label{corollary1}
Let $ (M, \langle, \rangle) $ be a complete, non-compact, Riemannian manifold with radial sectional curvature satisfying
\begin{eqnarray}\label{sectionalcurvature}
K_{M} \geq - B^{2}\prod_{j=1}^{\ell}\left[\ln^{(j)}\left(\int_{0}^{r}\frac{ds}{\sqrt{G(s)}} + 1\right)+1\right]^{2}G(r),\,\,\,\,{\rm for} \,\,\,\, r(x)\gg 1,
\end{eqnarray}
 where $G\in C^{\infty}([0,+\infty)) $ is even at the origin and satisfies iv), $r(x)={\rm dist}_{M}(x_{0}, x)$ and  $B\in \mathbb{R}$. Then, the Omori-Yau maximum principle for any semi-elliptic operator $ \Ll =  \tr(\Pp\circ\hess (\cdot)) + \langle V, \grad (\cdot) \rangle $ with  $ \tr\Pp \leq \left(\int_{0}^{r}\frac{ds}{\sqrt{G(s)}} + 1\right) $ and $\limsup_{x \to \infty}\vert V \vert < + \infty$  holds on $M$.
\end{corollary}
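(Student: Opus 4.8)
The plan is to deduce the Corollary from Theorem \ref{Thm1}, in the sharper form provided by Remark \ref{remark1}, by taking as auxiliary function the distance $\gamma(x)=r(x)={\rm dist}_M(x_0,x)$ from the fixed base point $x_0$ (suitably regularized near the cut locus, see below). Abbreviate $F(r)=\int_0^r G(s)^{-1/2}\,ds+1$. Since $G'\ge 0$ and $G(0)>0$ we have $\sqrt{G(s)}\ge\sqrt{G(0)}>0$, so $F$ is smooth and increasing with $F(0)=1$, and $F(r)\to\infty$ as $r\to\infty$ by the non-integrability of $G^{-1/2}$; in particular condition i) of Theorem \ref{Thm1} holds ($M$ being complete and non-compact), and condition iv) is precisely the assumption on $G$. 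The gradient bound i. of Remark \ref{remark1} is immediate: $|\nabla r|\equiv 1$, while the right-hand side is $\ge A\sqrt{G(0)}$ since each factor $\ln^{(j)}(F(r))+1$ and $F(r)$ is $\ge 1$, so any $A\ge G(0)^{-1/2}$ works. All the content lies in verifying the Hessian bound ii. of Remark \ref{remark1}.

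For this I would invoke the Hessian comparison theorem. Put $c(r)^2=B^2\prod_{j=1}^{\ell}\bigl[\ln^{(j)}(F(r))+1\bigr]^2 G(r)$; this is smooth, positive, and nondecreasing for $r\gg 1$, being a product of nondecreasing positive functions. Let $h$ be the solution of $h''-c^2h=0$ with $h(0)=0$, $h'(0)=1$ (the hypothesis that $G$ is even at the origin guarantees the comparison model is smooth). The curvature assumption \eqref{sectionalcurvature} says exactly that the radial sectional curvature satisfies $K_M\ge -c(r)^2$, so the Hessian comparison theorem gives, off ${\rm Cut}(x_0)$,
\begin{equation*}
\Hess r(X,X)\ \le\ \frac{h'(r)}{h(r)}\bigl(|X|^2-\langle\nabla r,X\rangle^2\bigr)\ \le\ \frac{h'(r)}{h(r)}\,|X|^2 .
\end{equation*}
Choosing an orthonormal eigenbasis $\{v_j\}$ of $P$ with eigenvalues $\lambda_j\ge 0$, taking traces, and using $\tr P\le F(r)$,
\begin{equation*}
\tr(P\circ\hess r)\ =\ \sum_{j}\lambda_j\Hess r(v_j,v_j)\ \le\ \frac{h'(r)}{h(r)}\sum_j\lambda_j\ =\ \frac{h'(r)}{h(r)}\,\tr P\ \le\ \frac{h'(r)}{h(r)}\,F(r).
\end{equation*}
Hence ii. of Remark \ref{remark1} follows once one shows $h'(r)/h(r)\le\Lambda\,c(r)$ for $r\gg 1$ and some constant $\Lambda$: this yields $\tr(P\circ\hess r)\le\Lambda B\,F(r)\prod_{j=1}^{\ell}[\ln^{(j)}(F(r))+1]\sqrt{G(r)}$, which is exactly ii. with the choice $\varphi(t)=\ln^{(\ell+1)}(F(t))$ prescribed in Remark \ref{remark1}.

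The estimate $h'/h\le\Lambda c$ is the technical core, and I would obtain it from the Riccati equation satisfied by $u:=h'/h>0$, namely $u'=c^2-u^2$: near $r=0$ one has $u\sim 1/r\to+\infty$, but at any point where $u=2c$ one gets $u'=c^2-4c^2=-3c^2<0\le 2c'$, so $u$ cannot cross the barrier $2c$ from below once it has dropped beneath it, while the initial singularity forces it beneath $2c$ in finite time (here $c\ge B\sqrt{G(0)}>0$ ensures $\int^{\infty}c=+\infty$, excluding degenerate behaviour); thus $\Lambda=2$ works for $r\gg 1$. This is the ODE estimate underlying the Omori--Yau principle, of the type recorded in \cite{prs-memoirs}. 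Finally, $r$ is only Lipschitz, smooth off ${\rm Cut}(x_0)\cup\{x_0\}$; this is handled in the standard way by Calabi's trick (near a prospective maximum point lying in the cut locus, replace $r$ by ${\rm dist}(y,\cdot)+{\rm dist}(x_0,y)$ for $y$ close on a minimizing geodesic, which is smooth there, dominates $r$, agrees with it at the point, and preserves all the above inequalities in the barrier sense), so the argument in the proof of Theorem \ref{Thm1} goes through unchanged; alternatively one replaces $r$ by a genuinely smooth comparable function with comparable gradient and Hessian bounds. With $\gamma$ so chosen, satisfying i) and iv) of Theorem \ref{Thm1} and i. and ii. of Remark \ref{remark1}, Theorem \ref{Thm1} applies and yields, for every $u\in C^2(M)$ bounded above, a sequence $\{x_k\}$ with $|\nabla u|(x_k)<1/k$, $\Ll u(x_k)<1/k$ and $u(x_k)\to\sup_M u$ — that is, the Omori--Yau maximum principle for $\Ll$. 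I expect the main obstacle to be precisely the Riccati/comparison estimate $h'/h\le\Lambda c(r)$ together with checking that the iterated-logarithm weights built into \eqref{sectionalcurvature} are matched, with no loss, by those produced by the comparison function, so that ii. of Remark \ref{remark1} holds exactly.
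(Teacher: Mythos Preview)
Your proposal is correct and follows essentially the same route as the paper: take $\gamma=r$, use Hessian comparison under the curvature lower bound \eqref{sectionalcurvature} to obtain $\Hess r\le D\prod_{j=1}^{\ell}[\ln^{(j)}(F(r))+1]\sqrt{G(r)}$, combine with $\tr P\le F(r)$, and invoke Theorem~\ref{Thm1} via Remark~\ref{remark1}. The paper simply cites \cite[Example~1.13]{prs-memoirs} for the comparison estimate $h'/h\le\Lambda c(r)$, whereas you spell out the Riccati argument and the Calabi cut-locus trick explicitly; these are precisely the details behind that citation.
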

\begin{proof}
Following the same steps of the example \cite[Example 1.13]{prs-memoirs} one has that bound \eqref{sectionalcurvature} implies
$$ \Hess r \leq D \prod_{j=1}^{\ell}\left[\ln^{(j)}\left(\int_{0}^{r}\frac{ds}{\sqrt{G(s)}}+1\right)+1\right]\sqrt{G(r)}$$ and this implies $$ \tr(P\circ \hess) \leq D \prod_{j=1}^{\ell}\left[\ln^{(j)}\left(\int_{0}^{r}\frac{ds}{\sqrt{G(s)}}+1\right)+1\right]\left(\int_{0}^{r}\frac{ds}{\sqrt{G(s)}}+1\right)\sqrt{G(r)}.$$ Hence Theorem \ref{Thm1} applies.

\end{proof}

\section{Immersions into Warped Products}By $ L^{\ell}\times_{\rho}P^n = N^{n+\ell}  $ we denote the product manifold $L^{\ell}\times P^n$ endowed with the  warped product metric $dL^{2}+ \rho^{2}(x) dN^{2}$, where $ L^{\ell} $ and $ P^n $ are Riemannian manifolds and $ \rho : L \to \mathbb{R}_{+} $ is a positive smooth function.
 We will need the following  definition   introduced in \cite{alias-bessa-montenegro-piccione}. \begin{definition} \label{def2}Let $M$ be a Riemannian manifold. We say that $ (G,\tilde{\gamma}) $ is an Omori-Yau pair for the Hessian in $M$ if $ \tilde{\gamma}\in C^{2}(M) $ is proper and satisfies
\begin{eqnarray}\label{eq1}
\vert \grad \tilde{\gamma} \vert &\leq & \sqrt{G(\tilde{\gamma})}\left(\int_{0}^{\tilde{\gamma}}\frac{ds}{\sqrt{G(s)}}+1\right)\nonumber  \\
&&\\
\Hess \tilde{\gamma} &\leq & \sqrt{G(\tilde{\gamma})}\left(\int_{0}^{\tilde{\gamma}}\frac{ds}{\sqrt{G(s)}}+1\right)\nonumber
\end{eqnarray}
\end{definition} \begin{remark} If a Riemannian manifold $M$ has an Omori-Yau pair for the Hessian  $ (G,\tilde{\gamma}) $ then the Omori-Yau maximum principle for the Hessian holds on $M$, see \cite{alias-bessa-montenegro-piccione}, \cite{barnabe-leandro}.\end{remark}

The following result gives conditions for an isometric immersion $ f \colon M^{m} \rightarrow L^{\ell}\times_{\rho}P^n = N^{n+\ell} $ into a warped product, where $ P^n $ carries an Omori-Yau pair $(G,\tilde{\gamma}) $ for the Hessian, to  carry an Omori-Yau pair.

\begin{theorem}\label{corollary2}
Let $ f \colon M^{m} \rightarrow L^{\ell}\times_{\rho}P^n = N^{n+\ell} $ be an isometric immersion where $ P^n $ carries an Omori-Yau pair $(G,\tilde{\gamma}) $ for the Hessian, $ \rho \in C^{\infty}(L) $ is a positive function, such that $ \inf \rho > 0 $ and $ \mathcal{H} = \grad \log \rho $ satisfies
\begin{eqnarray}\label{hipeta}
\vert \mathcal{H}\vert(\pi_{L}(f)) \leq \ln\left(\int_{0}^{\pi_{L}(f)}\frac{ds}{\sqrt{G(s)}}+1\right).
\end{eqnarray}
If $ f $ is proper on the first entry and
\begin{eqnarray}\label{hipcurvmedia}
\vert \alpha \vert \leq \ln\left(\int_{0}^{\tilde{\gamma}\circ\pi_{P}(f)}\frac{ds}{\sqrt{G(s)}}+1\right),
\end{eqnarray}
where $ \pi_{L}, \pi_{P} $ are the projections on $ L^{\ell} $ and $ P^n $ respectively, and $\alpha$ is the second fundamental form of the immersion $f$, then $ M^m $ has an Omori-Yau pair for any semi-elliptic operator \begin{eqnarray*} \Ll =  \tr(\Pp\circ\hess (\cdot)) + \langle V, \grad (\cdot) \rangle & with &\displaystyle{\tr\Pp \leq \prod_{j=2}^{k}\left[\ln^{(j)}\left(\int_{0}^{\tilde{\gamma}\circ\pi_{P}(f)}\frac{ds}{\sqrt{G(s)}} + 1\right)+1\right]} . \end{eqnarray*}
\end{theorem}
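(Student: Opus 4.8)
The plan is to build an explicit Omori-Yau pair $(\bar{G},\gamma)$ on $M^m$ by transplanting the pair $(G,\tilde{\gamma})$ from $P^n$ along the immersion, exactly so that Theorem~\ref{Thm1} (or rather its refinement in Remark~\ref{remark1}) applies to the given semi-elliptic operator $\Ll$. The natural candidate is
\begin{eqnarray*}
\gamma = \tilde{\gamma}\circ\pi_P\circ f + h\bigl(\pi_L\circ f\bigr),
\end{eqnarray*}
where $h$ is a suitably chosen exhaustion-type function on $L^\ell$ built from the metric $dL^2$; since $f$ is proper on the first entry and $\tilde{\gamma}$ is proper, each piece controls divergence in its own factor, so $\gamma\to+\infty$ as $x\to\infty$ in $M$, giving hypothesis (i). (In the model case $L=\mathbb{R}$ one simply takes $h(t)=t^2$ or a smoothed $|t|$.)

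The core of the argument is the computation of $\grad\gamma$ and $\hess\gamma$ on $M$ in terms of the warped-product geometry of $N^{n+\ell}$. First I would write down the Hessian comparison for a composition $\psi\circ\pi_P\circ f$: by the chain rule for the Hessian of a function on an immersed submanifold,
\begin{eqnarray*}
\Hess(\psi\circ\pi_P\circ f)(X,X) = \Hess^{N}(\psi\circ\pi_P)(df X, df X) + \langle \grad^{N}(\psi\circ\pi_P),\alpha(X,X)\rangle,
\end{eqnarray*}
and then decompose $\Hess^{N}(\psi\circ\pi_P)$ using the standard warped-product formulas (the $P$-directions contribute $\Hess^{P}\psi$ rescaled by $\rho$, and there is a correction term involving $\mathcal{H}=\grad\log\rho$, which is where hypothesis~\eqref{hipeta} enters). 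The bounds~\eqref{eq1} on $\tilde{\gamma}$ then propagate: $\vert\grad^{P}\tilde{\gamma}\vert$ and $\Hess^{P}\tilde{\gamma}$ are each at most $\sqrt{G(\tilde{\gamma})}\bigl(\int_0^{\tilde{\gamma}}G(s)^{-1/2}ds+1\bigr)$, the second-fundamental-form term is absorbed using~\eqref{hipcurvmedia}, and the $\mathcal{H}$-term is absorbed using~\eqref{hipeta}. The upshot should be an estimate of the form
\begin{eqnarray*}
\vert\grad\gamma\vert \leq C_1\sqrt{G(\gamma)}\Bigl(\int_0^{\gamma}\tfrac{ds}{\sqrt{G(s)}}+1\Bigr),\qquad
\Hess\gamma \leq C_2\,\Lambda(\gamma)\,\sqrt{G(\gamma)}\Bigl(\int_0^{\gamma}\tfrac{ds}{\sqrt{G(s)}}+1\Bigr),
\end{eqnarray*}
where $\Lambda(\gamma)=\prod_{j=2}^{k}\bigl[\ln^{(j)}(\int_0^{\gamma}G(s)^{-1/2}ds+1)+1\bigr]$ is precisely the iterated-logarithm factor that matches the allowed growth of $\tr\Pp$. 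Since $\Pp$ is positive semi-definite, $\tr(\Pp\circ\hess\gamma)\leq (\tr\Pp)\cdot\lambda_{\max}(\hess\gamma)$, and feeding in the hypothesis on $\tr\Pp$ turns the Hessian bound into condition~(iii) of Theorem~\ref{Thm1} in the form permitted by Remark~\ref{remark1}. Condition~(ii) follows from the gradient estimate a fortiori, and condition~(iv) on $G$ is inherited from the Omori-Yau pair on $P^n$.

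The main obstacle I anticipate is bookkeeping the warped-product correction terms so that the $L^\ell$-factor contributions do not spoil the estimates: one must check that the chosen $h$ on $L^\ell$ has $\vert\grad h\vert$ and $\Hess h$ controlled (which is automatic if, e.g., $L$ has a pole or if $\ell=1$, but requires care in general), and, more delicately, that the cross-term $\langle\grad^{N}(\tilde{\gamma}\circ\pi_P),\nabla\rangle$ interacting with $\mathcal{H}$ stays within the iterated-log envelope — this is exactly what~\eqref{hipeta} is calibrated to guarantee, with the single logarithm there being consumed to produce one of the factors, leaving the product to start at $j=2$ as in the statement. Once these estimates are assembled, invoking Theorem~\ref{Thm1} together with Remark~\ref{remark1} (taking $\varphi=\ln^{(\ell+1)}$ or the appropriate iterate) yields the sequence $\{x_k\}$ with $\vert\grad u\vert(x_k)<1/k$ and $\Ll u(x_k)<1/k$, which is the definition of an Omori-Yau pair for $\Ll$ on $M^m$, completing the proof.
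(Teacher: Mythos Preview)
Your overall strategy---transplant $\tilde{\gamma}$ along the immersion, apply the submanifold Hessian formula, decompose via the warped-product structure, and absorb the correction terms using \eqref{hipeta} and \eqref{hipcurvmedia}---is exactly what the paper does. The Hessian computation you sketch, the way the single logarithm from \eqref{hipeta} and \eqref{hipcurvmedia} supplies the $j=1$ factor so that the bound on $\tr\Pp$ can start at $j=2$, and the final appeal to Theorem~\ref{Thm1}/Remark~\ref{remark1} all match the paper line for line.

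The one substantive deviation is your choice of candidate: the paper takes simply $\gamma=\tilde{\gamma}\circ\pi_P\circ f$, with \emph{no} additive term $h(\pi_L\circ f)$. Dropping that term eliminates the very obstacle you anticipate---there is no need to control $\vert\grad h\vert$ or $\Hess h$ on a general $L^{\ell}$ about which nothing is assumed. Your concern there is well placed: for arbitrary $L$ no such $h$ need exist, so your extra term is not just unnecessary but potentially fatal to the argument in full generality. With the paper's simpler choice one writes $\beta=\tilde{\gamma}\circ\pi_P$ on $N$, notes $\grad\beta=\rho^{-2}\grad\tilde{\gamma}$ so that $\vert\grad\gamma\vert_M\le\rho^{-1}\Vert\grad\tilde{\gamma}\Vert_P$ (and $\inf\rho>0$ handles the factor), and then computes $\Hess_N\beta(X,X)=0$, $\Hess_N\beta(X,Z)=\langle\grad\eta,X\rangle\langle\grad\tilde{\gamma},Z\rangle$, $\Hess_N\beta(Z,Z)=\Hess_P\tilde{\gamma}(Z,Z)$ for $X\in TL$, $Z\in TP$, yielding directly
\[
\Hess_M\gamma(e,e)\le \ln\!\left(\int_0^{\gamma}\tfrac{ds}{\sqrt{G(s)}}+1\right)\sqrt{G(\gamma)}\left(\int_0^{\gamma}\tfrac{ds}{\sqrt{G(s)}}+1\right)\Bigl(3+\tfrac{1}{\rho}\Bigr)\vert e\vert^2
\]
off a compact set. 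Properness of $\gamma$ is where the paper spends its ``proper on the first entry'' hypothesis (admittedly tersely); you were right to flag properness as the issue, but the remedy is to assume it rather than to manufacture it with an auxiliary $h$.
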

\begin{proof}
By abuse of language,  we will  denote  by $ \langle , \rangle_{N}$,   $ \langle , \rangle_{L}$,   $ \langle , \rangle_{P}$  and $ \langle , \rangle_{M}$ the Riemannian metrics on $ N^{n+\ell} $, $ L^{\ell} $,  $ M^{m} $ and  on  $ P^n $ respectively. Let
$ \Vert\, \cdot \, \Vert_{N,L,M.P} $  be their respective norms.  We will denote by $ X,Y $,  vector fields in   $ TL $ and by  $ W,Z $  vector fields in $ TP $.  Let $\nabla^{N}$, $\nabla^{P}$ and $\nabla^{L}$ denote the Riemannian connections on $N$, $P$ and $L$ respectively. We need few lemmas to prove Corollary \ref{corollary2}.
\begin{lemma}The proof of the following relations are straight forward.
\begin{equation}\begin{array}{lllllllll}\nabla^{N}_{X}Y &=& \nabla^{L}_{Y}X, & &\nabla^{N}_{Z}X& = &\nabla^{N}_{X}Z &=& X(\eta)Z,\end{array}
\begin{array}{lll}\nabla^{N}_{Z}W& =& \nabla^{P}_{Z}W - \langle Z,W\rangle\displaystyle \grad^{L}\eta ,\end{array}\end{equation}where  $ \eta = \log \rho $.
\end{lemma}
Recall that $P$ carries a Omori-Yau pair $(G, \tilde{\gamma})$. Letting $\pi_{P}\colon N^{\ell +n} \to P^{n}$ be the projection on the second factor we define  $ \beta \colon N \to \mathbb{R} $ by  $ \beta = \tilde{\gamma}\circ \pi_{P} $ e $ \gamma \colon M \to \mathbb{R} $ by $ \gamma = \beta \circ f $. It is clear that
\begin{equation}\begin{array}{l}
\langle \grad \beta , X \rangle_{N} = 0  \end{array}{\rm and} \begin{array}{l} \langle \grad \tilde{\gamma}, Z\rangle_{P} = \langle \grad \beta , Z \rangle_{N}
= \rho^{2}\langle\grad \beta , Z\rangle_{P}.\end{array}
\end{equation}Thus  $\grad \beta = \displaystyle\frac{1}{\rho^2}\grad \tilde{\gamma}$, $\Vert \grad \tilde{\gamma}\Vert_{N} = \rho \Vert \grad \tilde{\gamma}\Vert_{P}$
and $$\vert \grad \gamma \vert_{M} \leq \Vert \grad \beta \Vert_{N}=\frac{1}{\rho^{2}}\Vert \grad \tilde{\gamma}\Vert_{N}=\frac{1}{\rho}\Vert \grad \tilde{\gamma}\Vert_{P}.$$ Moreover, for all $e\in TM$ we have that, (identifying $f_{\ast}e=e$), \begin{equation}\label{eq19A}\Hess_{M} \gamma (e,e) = \Hess_{N} \beta (e,e) + \langle \grad \beta ,\alpha(e,e)\rangle_{N}. \end{equation} Here $\alpha$ is the second fundamental form of the immersion.
Let us write $e=X+Z$ where $X\in TL$ and $Z\in TP$  and  we have that $\Hess_{N} (e,e)=\Hess_{N} (X,X)+ 2\Hess_{N} (X,Z) + \Hess_{N} (Z,Z)$ and \begin{equation}\Hess_{N} \beta (X,X) =\langle \nabla^{N}_{X}\grad \beta ,X\rangle_{N}= \langle X(\eta)\grad \beta , X\rangle_{N}=0.\label{eq21}\end{equation}

\begin{equation}\Hess_{N} \beta (X,Z) = \langle \nabla^{N}_{X} \grad \beta ,Z\rangle_{N}
=X(\eta)\langle \grad \beta ,Z \rangle_{N}
= \langle \grad \eta ,X\rangle_{L} \langle \grad \tilde{\gamma},Z\rangle_{P} .\label{eq22}\end{equation}

\begin{eqnarray}\Hess_{N} \beta (Z,Z) &=& \langle \nabla^{N}_{Z}\grad \beta ,Z\rangle_{N} \nonumber \\
&=& \langle \nabla^{P}_{Z}\grad \tilde{\gamma}, Z\rangle_{N}  - \langle \grad \beta, Z\rangle_{N} \cancelto{0}{\langle  \grad \eta ,Z\rangle_{N}}\label{eq23} \\
& = & \frac{1}{\rho^2}\langle \nabla^{P}_{Z}\grad \tilde{\gamma} ,Z \rangle_{N}
= \Hess_{P} \tilde{\gamma} (Z,Z).\nonumber
\end{eqnarray}

Hence from (\ref{eq21}, \ref{eq22}, \ref{eq23}) we have that
\begin{eqnarray}
\Hess_{N} \beta (e,e) = 2\langle \grad \eta ,X\rangle_{L}\langle \grad \tilde{\gamma},Z\rangle_{P} + \Hess_{P} \tilde{\gamma}(Z,Z).
\end{eqnarray}
Thus, from \eqref{eq19A},
\begin{eqnarray}
\Hess_{M} \gamma (e,e) &=& 2\langle \grad \eta ,X\rangle_{L}\langle \grad \tilde{\gamma},Z\rangle_{P} + \Hess_{P} \tilde{\gamma}(Z,Z) + \langle \grad \beta ,\alpha(e,e)\rangle_{N} \nonumber \\
&&\nonumber \\
&=&  2\langle \grad \eta ,X\rangle_{L}\langle \grad \tilde{\gamma},Z\rangle_{P} + \Hess_{P} \tilde{\gamma}(Z,Z) + \frac{1}{\rho^2}\langle \grad \tilde{\gamma},\alpha(e,e)\rangle_{N}\nonumber  \\
&& \\
&\leq & 2\Vert \mathcal{H}\Vert_{L} \cdot \Vert \grad \tilde{\gamma}\Vert_{P} \cdot \Vert X\Vert_{L}\cdot\Vert Z\Vert_{P} + \Hess_{P} \tilde{\gamma}(Z,Z) + \frac{1}{\rho^2}\langle \grad \tilde{\gamma},\alpha(e,e)\rangle_{N} \nonumber \\
&&\nonumber \\
&\leq & \ln\left(\int_{0}^{\gamma}\frac{ds}{\sqrt{G(s)}}+1\right)\sqrt{G(\gamma)}\left(\int_{0}^{\gamma}\frac{ds}{\sqrt{G(s)}}
+1\right)\left(3+\frac{1}{\rho}\right)\vert e\vert^2 \nonumber
\end{eqnarray}off a compact set,  since $\ln\left(\int_{0}^{\gamma}\frac{ds}{\sqrt{G(s)}}+1\right)>1$ there.

\vspace{2mm}

Let  $ x \in M $ and choose a basis  $ \{e_{1},...,e_{n}\}$ for $T_{x}M$ formed by eigenvectors of $P(x)$ with eigenvalues $ \lambda_{j}(x) = \langle P(x)e_{j},e_{j} \rangle \geq 0 $. Since $$\Ll \gamma (x)=\sum_{i=1}^{n} \langle P(x)(\hess \gamma(x)(e_i)), e_i\rangle = \sum_{i=1}^{n} \langle  \hess \gamma(x)(e_i), P(x)(e_i)\rangle= \sum_{i=1}^{n} \lambda_i (x) \Hess \gamma(e_i, e_i) $$ we have then
\begin{eqnarray*}
\Ll \gamma &\leq & \tr P\cdot \ln\left(\int_{0}^{\gamma}\frac{ds}{\sqrt{G(s)}}+1\right)\sqrt{G(\gamma)}\left(\int_{0}^{\gamma}\frac{ds}{\sqrt{G(s)}}
+1\right)\left(3+\frac{1}{\rho}\right) + \frac{1}{\rho}\vert V \vert\sqrt{G(\gamma)}\left(\int_{0}^{\gamma}\frac{ds}{\sqrt{G(s)}}+1\right)  \nonumber \\
&\leq & \sqrt{G(\gamma)}\left(\int_{0}^{\gamma}\frac{ds}{\sqrt{G(s)}}+1\right)\left(3+\frac{D}{\rho}\right)\prod_{j=1}^{k}\ln^{(j)}\left(\int_{0}^{\gamma}
\frac{ds}{\sqrt{G(s)}} + 1\right)
\end{eqnarray*}By Theorem \ref{Thm1} and Remark \ref{remark1} the Omori-Yau maximum principle holds on $M$.
\end{proof}

 \section{Curvature estimates}\label{sec2}

\subsection{The operators $L_r$}Let $ f \colon M\hookrightarrow N$ be an isometric immersion of a  connected $n$-dimensional Riemannian manifold $M$ into the $(n+1)$-dimensional Riemannian manifold $N$. Let $\alpha(X)=-\overline{\nabla}_{X}\eta$, $X\in TM$ be the second fundamental form of the immersion with respect to a locally defined normal vector field $\eta$, where $\overline{\nabla}$ is
the Levi-Civita connection of $N$. Its eigenvalues $\kappa_{1},\ldots, \kappa_{n}$ are the principal curvatures of the hypersurface $M$. The elementary symmetric functions of the principal curvatures are defined by

\begin{equation}
S_{0}=1,\;\;\;\;\;S_{r}= \sum_{i_{1}<\cdots <i_{r}} k_{i_{1}} \cdots
k_{i_{r}},\;\;\;\;\;1\leq r\leq n. \label{Sr-eq1}
\end{equation}

The elementary symmetric functions $S_r$ define the $r$-mean curvature $H_r$ of the immersion by  \begin{equation}H_r=\left(\begin{array}{l}n\\r \end{array}\right)^{-1}S_{r},
\end{equation} so that $H_{1}$ is the mean curvature and $H_{n}$ is the Gauss-Kronecker curvature.
The Newton tensors $P_r\colon TM \to TM$, for $r =
0,1,\ldots, n$, are defined setting
$P_{0}=I\,\,\,{\rm and }\,\, P_{r} = S_{r}Id-\alpha P_{r-1}$ so that
$P_{r}(x): T_{x}M\rightarrow T_{x}M$ is a self-adjoint linear
operator  with the same eigenvectors as $\alpha$. From here  we will be following \cite[p.3]{alias-impera-rigoli} closely. When $r$ is even the sign of $S_r$ does not depend on $\eta$ which implies that the tensor $P_r$ is globally defined on $TM$. If  $r$ is odd we will assume that $M$ is two sided, i.e. there exists a globally defined unit normal vector field $\eta$ in $f(M)$. When a hypersurface is two sided, a choice of $\eta $ makes $P_r$ globally defined. To give an uniform treatment in what follows we shall assume from now on that $M$ is two-sided.

For each $u\in C^{2}(M)$,   define a symmetric operator $\hess u\colon TM \to TM$  by \begin{equation}\hess u (X)=\nabla_{X}\grad u\end{equation} for every $X\in TM$, where  $\nabla$ be the Levi-Civita connection of $M$ and the symmetric bilinear form  $\Hess u \colon TM \times TM \to C^{\infty}(M)$ by \begin{equation}\Hess u (X,Y)=\langle \hess u (X), Y\rangle.\end{equation} Associated to  each Newton operator $P_r\colon TM \to TM $ there is  a second order self-adjoint differential operator $L_r\colon C^{\infty}(M)\to C^{\infty}(M)$ defined by
\begin{equation} L_r (u) = Tr(P_r \hess u)=\diver (P_r \grad u)-\langle Tr (\nabla P_r), \grad u\rangle    \end{equation} However, these operators may be not elliptic. Regarding the ellipticity of the $L_r$'s, one sees  that the operator $L_r$ is elliptic if and only if $P_r$ is positive definite. There
 are geometric conditions implying the positiveness of the $P_r$ and thus the ellipticity of the
$L_{r}$, e.g. $H_{2}>0$ implies that $H_{1}>0$ by the well known inequality $H_{1}^{2}\geq H_2$. And that implies that all the eigenvalues of $P_1$ are positive and  the ellipticity of $L_{1}$. For  the ellipticity of $L_r$, $r\geq 2$, it is enough to assume that there exists  an elliptic point $p\in M$, i.e. a point where the second fundamental form $\alpha$ is positive definite (with respect to an orientation) and $H_{r+1} >0$. See details in \cite{barbosa-colares}, \cite{cheng-yau-math-ann-77}, \cite{hartaman-TAMS-78}, \cite{reilly-jdg-73}.
 In this section we are going to apply Theorem \ref{Thm1} to the operators $L_r$ in order to derive curvature estimates.

 Again, we denote by $ N^{n+1} = I\times_{\rho}P^n $ the product manifold $I\times P^n$ endowed with the warped product metric $dt^{2}+\rho^{2}(t)d^{2}P$, where $ I \subset \mathbb{R} $ is a open interval, $ P^n $ is a complete Riemannian manifold and $ \rho \colon I \to \mathbb{R}_{+} $ is a smooth function. Given an isometrically immersed hypersurface  $ f \colon  M^{n} \to N^{n+1} $, define $ h \colon M^{n} \to I $ the $ C^{\infty}(M^{n}) $ height function by setting $ h = \pi_{I}\circ f $, where $\pi_{I}\colon N^{n+1}=I\times_{\rho}P^n\to I$ is the projection on the first factor. We will need the following result similar to \cite[Prop. 6]{alias-impera-rigoli}.


\begin{lemma}\label{lemma2}
Let $ f \colon  M^n \to I\times_{\rho}P^n = N^{n+1} $ be an isometric immersion into a warped product space. Let $ h $ be the height function and define
\begin{eqnarray}\label{fsigma}
\sigma(t) &=& \int_{t_0}^{t}\rho(s)ds.
\end{eqnarray}
Then
\begin{eqnarray}
\hat{\Ll}_{k}\sigma(h) = c_{k}\rho(h)\left(\mathcal{H}(h) + \Theta\frac{H_{k+1}}{H_k}\right),
\end{eqnarray}
where $ c_k = (n-k)\left(\begin{array}{c}
n \\
k
\end{array}\right) $, $ \Theta = \langle \eta,T \rangle $ is the angle function,  $ \mathcal{H}(h) =\displaystyle \frac{\rho'(h)}{\rho(h)} $ and $ \hat{\Ll}_k = \tr(\hat{\Pp}_{k}\circ \hess) $ with $ \hat{\Pp}_k = \dfrac{\Pp_k}{H_k} $.
\end{lemma}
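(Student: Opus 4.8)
The plan is to compute $\hat{\Ll}_k \sigma(h)$ directly by first treating the ambient function $\sigma \circ \pi_I$ on $N^{n+1}$ and then restricting to $M$. Writing $T = \partial_t$ for the unit vector field tangent to the $I$-factor, one has $\grad^N(\pi_I) = T$, and since $\sigma'(t) = \rho(t)$, the chain rule gives $\grad^N(\sigma\circ\pi_I) = \rho(h)\,T$ along $M$. Decomposing $T = T^\top + \Theta\,\eta$ where $T^\top$ is tangent to $M$ and $\Theta = \langle \eta, T\rangle$, the Gauss formula yields, for $X \in TM$,
\begin{equation*}
\Hess_M(\sigma(h))(X,X) = \Hess_N(\sigma\circ\pi_I)(X,X) + \langle \grad^N(\sigma\circ\pi_I), \alpha(X,X)\rangle = \Hess_N(\sigma\circ\pi_I)(X,X) + \rho(h)\,\Theta\,\langle \alpha(X,X),\eta\rangle.
\end{equation*}
So I need the ambient Hessian of $\sigma\circ\pi_I$. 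Using $\nabla^N_X T = \mathcal{H}(t)\big(X - \langle X,T\rangle T\big)$ (the standard warped-product connection formula for $\partial_t$ against a vector, with $\mathcal H = \rho'/\rho$), one computes $\Hess_N(\sigma\circ\pi_I)(X,X) = \rho(h)\mathcal{H}(h)\big(|X|^2 - \langle X,T\rangle^2\big)$; this is where the first lemma's connection relations are invoked. Combining, $\hess_M(\sigma(h)) = \rho(h)\mathcal{H}(h)\big(I - T^\top\otimes T^\top\big) + \rho(h)\Theta\,\alpha$ as operators on $TM$.

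Next I would take the trace against $\hat{P}_k = P_k/H_k$. Using the classical identities $\tr(P_k) = c_k H_k$ with $c_k = (n-k)\binom{n}{k}$ and $\tr(P_k\circ\alpha) = c_k H_{k+1}$ (from $P_{k} = S_k I - \alpha P_{k-1}$ and the combinatorics of the $S_r$), together with $\tr\big(P_k(T^\top\otimes T^\top)\big) = \langle P_k T^\top, T^\top\rangle$, one gets
\begin{equation*}
\hat{\Ll}_k\sigma(h) = \frac{1}{H_k}\Big[\rho(h)\mathcal{H}(h)\big(c_k H_k - \langle P_k T^\top, T^\top\rangle\big) + \rho(h)\Theta\, c_k H_{k+1}\Big].
\end{equation*}
To reach the stated clean formula $c_k\rho(h)\big(\mathcal H(h) + \Theta H_{k+1}/H_k\big)$ the term $\langle P_k T^\top,T^\top\rangle$ must be accounted for; I expect that the intended statement either implicitly uses a Codazzi-type cancellation or that Lemma~\ref{lemma2} is really computing $\hat{\Ll}_k$ in a form where the ambient part is handled before projection — more precisely, the cleanest route is to note $\diver_M(P_k \grad^N(\sigma\circ\pi_I)^\top)$ unfolds via the Codazzi equation in a warped product (which controls $\tr(\nabla P_k)$ and the curvature terms), so that the $T^\top$ correction is absorbed. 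This reconciliation of the $\langle P_k T^\top, T^\top\rangle$ term is the main obstacle; I would resolve it by following the computation in \cite[Prop.~6]{alias-impera-rigoli} verbatim, since the ambient is a warped product with one-dimensional base and the relevant curvature terms of $N$ acting on $P_k$ vanish against $T$ in exactly the way needed.

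In summary: (1) compute $\grad$ and $\Hess$ of $\sigma\circ\pi_I$ on $N$ using the warped-product connection from the first lemma; (2) restrict to $M$ via Gauss to get $\hess_M\sigma(h)$ in terms of $\mathcal H$, $\Theta$, $\alpha$, and $T^\top$; (3) trace against $\hat P_k = P_k/H_k$ using $\tr P_k = c_k H_k$ and $\tr(P_k\alpha) = c_k H_{k+1}$; (4) invoke the warped-product Codazzi identity to handle the tangential-part term and arrive at the stated expression. The only genuinely delicate point is step (4); everything else is a bookkeeping exercise with the Newton-tensor trace identities.
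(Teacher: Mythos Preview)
Your approach (compute the ambient Hessian of $\sigma\circ\pi_I$ on $N$, restrict to $M$ via Gauss, then trace against $\hat P_k$) is perfectly sound, but you made a computational slip that created a phantom obstacle. In the product rule $\nabla^N_X(\rho T)=X(\rho)\,T+\rho\,\nabla^N_X T$ you dropped the first summand. Since $X(\rho)=\rho'\langle X,T\rangle$ and $\nabla^N_X T=\mathcal H\,(X-\langle X,T\rangle T)=\tfrac{\rho'}{\rho}(X-\langle X,T\rangle T)$, the two pieces combine to give simply $\nabla^N_X(\rho T)=\rho'\,X$; hence
\[
\Hess_N(\sigma\circ\pi_I)(X,X)=\rho'(h)\,|X|^2,
\]
not $\rho(h)\mathcal H(h)\bigl(|X|^2-\langle X,T\rangle^2\bigr)$. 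With the correct expression the Gauss formula yields $\hess_M\sigma(h)=\rho'(h)\,I+\rho(h)\,\Theta\,A$ on $TM$, and tracing against $P_k/H_k$ gives the stated formula immediately from $\tr P_k=c_kH_k$ and $\tr(P_kA)=c_kH_{k+1}$. There is no residual $\langle P_kT^\top,T^\top\rangle$ term, and no Codazzi identity is needed; your step~(4) is an artifact of the missing $X(\rho)T$.

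For comparison, the paper works intrinsically on $M$: it first writes $\hess_M\sigma(h)=\rho(h)\hess_Mh+\rho'(h)\,\grad h\otimes\grad h$ by the chain rule, then uses the known formula $\hess_Mh(X)=\mathcal H(h)\bigl(X-\langle X,\grad h\rangle\grad h\bigr)+\Theta AX$. After tracing against $P_k$ the term $-\rho(h)\mathcal H(h)\langle P_k\grad h,\grad h\rangle$ cancels exactly against $+\rho'(h)\langle P_k\grad h,\grad h\rangle$ (note $\grad h=T^\top$), and one arrives at the same result. So the ``delicate'' cancellation you were anticipating is precisely this elementary one, not a Codazzi argument; in your ambient route it is already absorbed into the fact that $\Hess_N(\sigma\circ\pi_I)$ is a pure multiple of the metric.
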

\begin{proof}
We observe that $ \grad \sigma(h) = \rho(h)\grad h $ and consequently
\begin{eqnarray}\label{hesssigma}
\hess \sigma(h)(X) &=& \nabla_{X}\grad \sigma(h) \nonumber \\
&& \nonumber \\
&=& \rho(h)\nabla_{X}\grad h + \langle \grad(p\circ h),X\rangle \grad h \\
&& \nonumber\\
&=& \rho(h)\hess h(X) + \rho'(h)\langle \grad h,X\rangle \grad h , \nonumber
\end{eqnarray}
for all $ X \in TM^n $.
Therefore,
\begin{eqnarray}\label{eqLl1}
\hat{\Ll}_{k}\sigma(h) &=& \tr(\hat{P}_{k}\circ\hess \sigma(h)) \nonumber \\
&=& \sum_{i}^{n}\langle \frac{1}{H_k}P_{k}\circ\hess \sigma(h)(e_i),e_i\rangle \nonumber \\
&=& \frac{1}{H_k}\sum_{i}^{n}\langle \rho(h)P_{k}\circ\hess h(e_i) + \rho'(h)\langle \grad h,e_i \rangle P_{k}\grad h,e_i\rangle \\
&=& \frac{1}{H_k}\left(\rho(h)\sum_{i}^{n}\langle P_{k}\circ\hess h(e_i),e_i\rangle + \rho'(h)\langle \grad h,P_{k}\grad h\rangle\right). \nonumber
\end{eqnarray}

For the other hand, the gradient of $ \pi_{I} \in C^{\infty}(M) $ is $ \grad^{N}\pi_{I} = T $, where $ T $ stands for the lifting of $\partial/ \partial t \in TI$
 to the product $I\times_{\rho}P^{n}$. Then,
\begin{eqnarray}
\grad h = (\grad^{N}\pi_{I})^{\perp} = T - \Theta \eta.
\end{eqnarray}
Since the Levi-Civita connection of a warped product satisfies
\begin{eqnarray*}
\bar{\nabla}_{X}T = \mathcal{H}(X - \langle X,T\rangle T) , \ \ \ \forall X \in TN^{n+1}
\end{eqnarray*}
we have
\begin{eqnarray*}
\bar{\nabla}_{X}\grad h = \mathcal{H}(h)(X - \langle X,T\rangle T) - X(\Theta)\eta + \Theta AX , \ \ \ \forall X \in TM^n
\end{eqnarray*}
and thus
\begin{eqnarray}\label{eqhessh}
\hess h(X) = \mathcal{H}(h)(X - \langle X,\grad h\rangle \grad h) + \Theta AX
\end{eqnarray}

Taking   \eqref{eqhessh} in account in \eqref{eqLl1} we get
\begin{eqnarray*}
\hat{\Ll}_{k}\sigma(h) &=& \frac{1}{H_k}\left[\rho(h)\left(\mathcal{H}(h)(\tr P_{k} - \langle P_{k}\grad h,\grad h\rangle) + \Theta \tr(P_{k}A)\right) + \rho'(h)\langle \grad h,P_{k}\grad h\rangle \right] \\
&=& \frac{1}{H_k}\left[\rho(h)\left(\mathcal{H}(h)(c_{k}H_{k} - \langle P_{k}\grad h,\grad h\rangle) + c_{k}\Theta H_{k+1}\right) + \rho'(h)\langle \grad h,P_{k}\grad h\rangle \right] \\
&=& \frac{1}{H_k}\left[\rho'(h)c_{k}H_{k} - \rho'(h)\langle P_{k}\grad h,\grad h\rangle + \rho(h)c_{k}\Theta H_{k+1} + \rho'(h)\langle P_{k}\grad h,\grad h\rangle\right] \\
&=& c_{k}\rho(h)\left(\mathcal{H}(h) + \Theta \frac{H_{k+1}}{H_k}\right)
\end{eqnarray*}
\end{proof}

The following result, (Theorem \ref{Thm2}), generalizes  \cite[Thm.10]{alias-impera-rigoli}. We will assume that
\begin{equation}\label{conditionh}\lim_{x\to \infty} \frac{\sigma \circ h (x)}{\varphi (\gamma (x))}=0
\end{equation}where $\sigma $ is given in \eqref{fsigma} and  $\varphi$ is given by \begin{equation}\varphi (t)=\ln \left(\int_{0}^{t}\frac{ds}{\sqrt{G(s)}}+1 \right) \end{equation} while  $(G, \gamma)$ is an Omori-Yau pair in the sense of Definition \ref{def2}.

\begin{theorem}\label{Thm2}
Let $ f\colon  M^n \to I\times_{\rho}P^n = N^{n+1} $ be a properly  immersed hypersurface with  second fundamental form $\alpha$ satisfying \eqref{hipcurvmedia} and $ H_2 > 0 $. Suppose that $P^{n}$ carry an Omori-Yau pair $(G, \gamma)$ for the Hessian, that $ \inf \rho > 0 $  and  $ \mathcal{H} = \displaystyle\frac{\rho'}{\rho} $ satisfies \eqref{hipeta}.  If  $ \tr(\hat{\Pp}_{1})\leq \left(\int_{0}^{\gamma\circ \pi_{\Pp}(f)} \frac{ds}{\sqrt{G(s)}} +1\right)$  and if the height function $ \sigma\circ h $ satisfies \eqref{conditionh}
then
\begin{eqnarray}
\sup_{M}H_{2}^{\frac{1}{2}} \geq \inf_{M}\mathcal{H}(h).
\end{eqnarray}
\end{theorem}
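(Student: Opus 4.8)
The plan is to run Theorem \ref{Thm1}, in the refined form of Remark \ref{remark1}, for the semi-elliptic operator $\hat{\Ll}_1 = \tr(\hat{\Pp}_1\circ\hess(\cdot))$, with test function the modified height $u = \sigma\circ h$, and then to read off the curvature inequality from Lemma \ref{lemma2} evaluated at $k=1$. As a first step I would check that $\hat{\Ll}_1$ is admissible: since $H_2>0$ we have $H_1^2\geq H_2>0$, so choosing the orientation $\eta$ with $H_1>0$ (possible because $M$ is two-sided) makes $P_1$ positive definite, hence $\hat{\Pp}_1=P_1/H_1$ is positive definite; moreover $\hat{\Ll}_1$ has no first order term, and $\tr\hat{\Pp}_1=(\tr P_1)/H_1=(n-1)S_1/H_1=n(n-1)$ is a \emph{constant}, so the hypothesis $\tr\hat{\Pp}_1\leq \int_{0}^{\gamma\circ\pi_P(f)}\frac{ds}{\sqrt{G(s)}}+1$ holds off a compact set.

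Next I would verify the hypotheses of Theorem \ref{Thm1}/Remark \ref{remark1} for the function $\gamma\circ\pi_P(f)$. This is exactly the computation in the proof of Theorem \ref{corollary2} specialized to $L^{\ell}=I$, $\ell=1$: using $\inf\rho>0$, \eqref{hipeta} and \eqref{hipcurvmedia} one obtains a Hessian comparison of the form $\Hess_{M}(\gamma\circ\pi_P(f))\leq C\,\ln\!\left(\int_{0}^{\gamma\circ\pi_P(f)}\frac{ds}{\sqrt{G(s)}}+1\right)\sqrt{G(\gamma\circ\pi_P(f))}\left(\int_{0}^{\gamma\circ\pi_P(f)}\frac{ds}{\sqrt{G(s)}}+1\right)$ off a compact set; tracing against $\hat{\Pp}_1$ and using that $\tr\hat{\Pp}_1$ is bounded then puts $\gamma\circ\pi_P(f)$ in the situation of conditions i)--iv) as refined in Remark \ref{remark1}. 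Since $u=\sigma\circ h\in C^2(M)$ satisfies \eqref{conditionh}, Theorem \ref{Thm1}/Remark \ref{remark1} yields a sequence $\{x_k\}\subset M$ with $\hat{\Ll}_1(\sigma\circ h)(x_k)<1/k$.

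Now Lemma \ref{lemma2} with $k=1$, for which $c_1=(n-1)\binom{n}{1}=n(n-1)$, gives
\begin{eqnarray*}
\hat{\Ll}_1(\sigma\circ h)(x_k) \;=\; n(n-1)\,\rho(h(x_k))\left(\mathcal{H}(h(x_k)) + \Theta(x_k)\frac{H_2(x_k)}{H_1(x_k)}\right) \;<\; \frac{1}{k}.
\end{eqnarray*}
Dividing by $n(n-1)\rho(h(x_k))\geq n(n-1)\inf_M\rho>0$ and setting $\delta_k:=\big(n(n-1)k\inf_M\rho\big)^{-1}\to 0$, this reads $\mathcal{H}(h(x_k))+\Theta(x_k)H_2(x_k)/H_1(x_k)<\delta_k$. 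Then I would use $\mathcal{H}(h(x_k))\geq\inf_M\mathcal{H}(h)$ together with $|\Theta|\leq 1$ and Newton's inequality $H_1\geq H_2^{1/2}>0$, which give $\Theta(x_k)H_2(x_k)/H_1(x_k)\geq -H_2(x_k)/H_1(x_k)\geq -H_2^{1/2}(x_k)$; combining, $\inf_M\mathcal{H}(h)-H_2^{1/2}(x_k)<\delta_k$, so $\sup_M H_2^{1/2}\geq H_2^{1/2}(x_k)>\inf_M\mathcal{H}(h)-\delta_k$, and letting $k\to\infty$ finishes the proof.

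I expect the main obstacle to be the second step: carefully checking that Theorem \ref{Thm1} (in the refined form of Remark \ref{remark1}) really does apply to $\hat{\Ll}_1$ with the function $\gamma\circ\pi_P(f)$ — that is, reproducing the Hessian comparison of Theorem \ref{corollary2} in this $\ell=1$ warped-product setting and bookkeeping the iterated logarithm so that the growth conditions i)--iv) hold and so that \eqref{conditionh} is precisely the hypothesis needed on the (possibly unbounded) test function $\sigma\circ h$, the passage from boundedness to \eqref{conditionh} being the essential use of that refinement. Everything after the sequence $\{x_k\}$ is extracted is the short algebra displayed above.
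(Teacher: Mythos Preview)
Your proposal is correct and follows essentially the same route as the paper: invoke Theorem \ref{corollary2} to ensure the Omori-Yau maximum principle for $\hat{\Ll}_1$ holds on $M$, apply it to $\sigma\circ h$, plug in Lemma \ref{lemma2} with $k=1$, and use $|\Theta|\leq 1$ together with $H_1\geq H_2^{1/2}$. Your version is in fact slightly cleaner in two spots: you make explicit that $\tr\hat{\Pp}_1=n(n-1)$ is constant (so the trace hypothesis is essentially automatic off a compact set), and you conclude using $\rho(h(x_k))\geq\inf_M\rho>0$ directly, whereas the paper's write-up appeals to $h(x_j)\to\sup h$, a claim that is not actually guaranteed by Theorem \ref{Thm1} under the growth hypothesis \eqref{conditionh} alone and is in any case unnecessary.
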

\begin{proof}
By Theorem \ref{corollary2}  the Omori-Yau maximum principle for $\hat{\Ll}_{1}$ holds on $M$ and then  there exists a sequence $ x_j \in M $ such that
\begin{eqnarray}\label{eqThm2}
\frac{1}{j} &>& \hat{\Ll}_{1}\sigma\circ h(x_j) = n(n-1)\rho(h(x_j))\left(\mathcal{H}(h(x_j)) + \Theta(x_j)\frac{H_{2}}{H_1}(x_j)\right) \nonumber \\
&\geq & n(n-1)\rho(h(x_j))\left(\mathcal{H}(h(x_j)) - \frac{H_{2}}{H_1}(x_j)\right) \\
&\geq & n(n-1)\rho(h(x_j))\left(\mathcal{H}(h(x_j)) - \sqrt{H_{2}}(x_j)\right) \nonumber \\
&\geq & n(n-1)\rho(h(x_j))\left(\inf_{M}\mathcal{H}(h) - \sup_{M}\sqrt{H_2}\right). \nonumber
\end{eqnarray}Where we used Lemma \ref{lemma2} in the right hand side the first line of \eqref{eqThm2}.
Since $ \sigma $ is strictly increasing and $ h(x_j) \to \sup h $ as $ j \to +\infty $, we obtain that
\begin{eqnarray*}
\sup_{M}\sqrt{H_2} \geq \inf_{M}\mathcal{H}(h).
\end{eqnarray*}
\end{proof}

\begin{corollary}\label{corollary3}
Let $ P^n $ be a complete, non-compact Riemannian manifold whose radial sectional curvature satisfies
\begin{eqnarray}
K_{P}^{rad} \geq - C\cdot  G(r) ,
\end{eqnarray}
where $G\in C^{\infty}([0,+\infty)) $ is even at the origin and satisfies iv) in Theorem \ref{Thm1}, $r(x)={\rm dist}_{M}(x_{0}, x)$. If $ f \colon M^n \to I\times_{\rho}P^n = N^{n+1} $ is a properly immersed hypersurface with $ H_2 > 0 $, satisfying \eqref{hipeta}, \eqref{hipcurvmedia}, \eqref{conditionh} and $ \inf \rho > 0 $ . If   $ \tr(\hat{\Pp}_{1})\leq \left(\int_{0}^{r\circ \pi_{\Pp}(f)} \frac{ds}{\sqrt{G(s)}} +1\right)$
then
\begin{eqnarray}
\sup_{M}H_{2}^{\frac{1}{2}} \geq \inf_{M}\mathcal{H}(h).
\end{eqnarray}
\end{corollary}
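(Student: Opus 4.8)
The plan is to deduce Corollary \ref{corollary3} directly from Theorem \ref{Thm2} by checking that the pointwise sectional curvature bound $K_P^{\mathrm{rad}}\geq -C\cdot G(r)$ produces an Omori--Yau pair $(G,\gamma)$ for the Hessian on $P^n$ in the sense of Definition \ref{def2}, and that all the remaining hypotheses of Theorem \ref{Thm2} are precisely the ones being assumed. Once that reduction is in place, the conclusion $\sup_M H_2^{1/2}\geq \inf_M\mathcal H(h)$ is literally the conclusion of Theorem \ref{Thm2}, with $\tilde\gamma$ (equivalently $\gamma$) taken to be the distance function on $P^n$.

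First I would set $\gamma(x)=\mathrm{dist}_{P}(x_0,x)$ (or a smoothing thereof away from the cut locus and the origin, as in \cite[Example 1.13]{prs-memoirs}; since $G$ is assumed even at the origin, the standard Hessian comparison applies near $x_0$ as well). The hypothesis $K_P^{\mathrm{rad}}\geq -C\,G(r)$, together with $G(0)>0$, $G'\geq 0$ and $G^{-1/2}\notin L^1(+\infty)$, is exactly the curvature condition of Corollary \ref{corollary1} with $\ell$ chosen so that the product of iterated logarithms is harmless, and the Hessian comparison theorem yields
\begin{eqnarray*}
\Hess_P\gamma \leq D\,\sqrt{G(\gamma)}\left(\int_0^{\gamma}\frac{ds}{\sqrt{G(s)}}+1\right)
\end{eqnarray*}
off a compact set, while $|\grad\gamma|\equiv 1$ trivially satisfies the gradient bound in \eqref{eq1}. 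After absorbing the constant $D$ (replacing $G$ by $D^2 G$, or $\gamma$ by $\gamma$ plus a constant, whichever keeps iv) intact), this gives that $(G,\gamma)$ is an Omori--Yau pair for the Hessian on $P^n$; note $\gamma$ is proper precisely because $P^n$ is complete and non-compact.

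With the Omori--Yau pair in hand, I would then simply quote Theorem \ref{Thm2}: the immersion $f\colon M^n\to I\times_\rho P^n$ is properly immersed with $H_2>0$ by assumption; the second fundamental form bound \eqref{hipcurvmedia}, the warping-function condition $\inf\rho>0$ together with \eqref{hipeta}, the trace bound $\tr(\hat\Pp_1)\leq\big(\int_0^{r\circ\pi_P(f)}\frac{ds}{\sqrt{G(s)}}+1\big)$, and the height-function growth condition \eqref{conditionh} are all assumed verbatim, now reading $\gamma\circ\pi_P(f)=r\circ\pi_P(f)$. Thus Theorem \ref{Thm2} applies and gives $\sup_M H_2^{1/2}\geq\inf_M\mathcal H(h)$, which is the claim.

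The only genuine technical point — and hence the ``main obstacle'' — is the passage from the \emph{radial sectional curvature lower bound} to the \emph{Hessian upper bound on the distance function} uniformly off a compact set, including controlling the behaviour across the cut locus (handled by working with barriers / in the support sense, as is standard) and near the origin (handled by the evenness of $G$, which makes the comparison model smooth there). Everything else is bookkeeping: matching the normalizing constant $C$ and the comparison constant $D$ against the freedom in $G$ and $\gamma$ so that conditions i)--iv) of Theorem \ref{Thm1} (via Theorem \ref{corollary2}) remain literally satisfied. I would phrase the proof as: ``Arguing as in the proof of Corollary \ref{corollary1}, the curvature bound gives an Omori--Yau pair $(G,\gamma)$ for the Hessian on $P^n$ with $\gamma=\mathrm{dist}_P(x_0,\cdot)$; the conclusion then follows from Theorem \ref{Thm2}.''
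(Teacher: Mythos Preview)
Your proposal is correct and matches the paper's intended approach: the paper states Corollary~\ref{corollary3} without proof, as an immediate consequence of Theorem~\ref{Thm2}, the only missing ingredient being that the radial sectional curvature bound $K_P^{rad}\geq -C\,G(r)$ furnishes an Omori--Yau pair $(G,\gamma)$ for the Hessian on $P^n$ with $\gamma=r$, via the Hessian comparison argument already used in the proof of Corollary~\ref{corollary1} (following \cite[Example~1.13]{prs-memoirs}). Your additional remarks on the cut locus, evenness of $G$ at the origin, and absorbing constants are more than the paper spells out, but they are accurate and do no harm.
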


Our next result  is just a version of Theorem \ref{Thm2} for higher order mean curvatures.
\begin{theorem}\label{Thm3}
Let $ f\colon  M^n \to I\times_{\rho}P^n = N^{n+1} $ be a proper isometric immersion  with an elliptic point and $ H_k > 0 $. Suppose that the second fundamental form $\alpha$ satisfies \eqref{hipcurvmedia} and that $ \inf \rho > 0 $. Moreover, assume that  $\displaystyle \mathcal{H} = \rho'/\rho $ satisfies \eqref{hipeta} and that $P^{n}$ carry an Omori-Yau pair $(G, \gamma)$ for the Hessian.  If  $ \tr(\hat{\Pp}_{k-1})\leq \left(\int_{0}^{\gamma\circ \pi_{\Pp}(f)} \frac{ds}{\sqrt{G(s)}} +1\right)$, $3\leq k\leq n$ and if the height function $ \sigma\circ h $ satisfies \eqref{conditionh}
then
\begin{eqnarray}
\sup_{M}H_{k}^{\frac{1}{k}} \geq \inf_{M}\mathcal{H}(h).
\end{eqnarray}
\end{theorem}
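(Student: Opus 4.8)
The plan is to follow the same scheme as the proof of Theorem \ref{Thm2}, replacing the operator $\hat{\Ll}_1$ by $\hat{\Ll}_{k-1}$ and invoking the generalized G\aa rding-type inequalities for the higher order mean curvatures that are available once an elliptic point exists. First I would record that, by the presence of an elliptic point together with $H_k>0$, the classical work of Barbosa--Colares and Caffarelli--Nirenberg--Spruck (cited in the excerpt as \cite{barbosa-colares}, \cite{cheng-yau-math-ann-77}, \cite{hartaman-TAMS-78}, \cite{reilly-jdg-73}) yields that $H_1,\dots,H_k$ are all positive on $M$ and that the Newton operators $P_1,\dots,P_{k-1}$ are positive definite; in particular $\hat{\Pp}_{k-1}=\Pp_{k-1}/H_{k-1}$ is a positive semi-definite symmetric tensor, so $\hat{\Ll}_{k-1}$ is a semi-elliptic operator of the form \eqref{operator} with $V=0$, and the hypothesis $\tr(\hat{\Pp}_{k-1})\le \bigl(\int_0^{\gamma\circ\pi_{\Pp}(f)} G(s)^{-1/2}\,ds+1\bigr)$ together with \eqref{hipeta}, \eqref{hipcurvmedia} and $\inf\rho>0$ places us exactly in the hypotheses of Theorem \ref{corollary2}. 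Hence the Omori-Yau maximum principle for $\hat{\Ll}_{k-1}$ holds on $M$.

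Next I would apply that principle to the function $u=\sigma\circ h$, which is bounded in the relevant direction by \eqref{conditionh}: there is a sequence $x_j\in M$ with $h(x_j)\to\sup_M h$ and $\hat{\Ll}_{k-1}(\sigma\circ h)(x_j)<1/j$. By Lemma \ref{lemma2} with $k$ replaced by $k-1$,
\begin{eqnarray*}
\hat{\Ll}_{k-1}\sigma(h)(x_j)=c_{k-1}\,\rho(h(x_j))\left(\mathcal{H}(h(x_j))+\Theta(x_j)\frac{H_k}{H_{k-1}}(x_j)\right),
\end{eqnarray*}
so that
\begin{eqnarray*}
\frac{1}{j}>c_{k-1}\,\rho(h(x_j))\left(\mathcal{H}(h(x_j))-\frac{H_k}{H_{k-1}}(x_j)\right).
\end{eqnarray*}
The remaining point is to bound $H_k/H_{k-1}$ from above by $H_k^{1/k}$. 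This follows from the Newton--Maclaurin inequalities: since $H_1,\dots,H_k>0$ one has $H_{k-1}H_1\ge H_k$ and, more to the point, the chain $H_k^{1/k}\le H_{k-1}^{1/(k-1)}\le\cdots\le H_1$ together with $H_k/H_{k-1}\le H_{k-1}^{1/(k-1)}\le H_k^{1/k}$ — the last inequality being exactly $H_k/H_{k-1}\le H_k^{1/k}$, equivalent to $H_k^{(k-1)/k}\le H_{k-1}$, i.e. to $H_k^{1/k}\le H_{k-1}^{1/(k-1)}$, which is a standard Maclaurin inequality valid when the relevant $H_j$ are positive. Plugging this in and using $\Theta\ge -1$ gives
\begin{eqnarray*}
\frac{1}{j}>c_{k-1}\,\rho(h(x_j))\left(\inf_M\mathcal{H}(h)-\sup_M H_k^{1/k}\right),
\end{eqnarray*}
and letting $j\to\infty$, with $\sigma$ strictly increasing and $\rho$ bounded below by a positive constant, yields $\sup_M H_k^{1/k}\ge\inf_M\mathcal{H}(h)$, as claimed.

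The step I expect to be the main obstacle is the verification that the elliptic-point hypothesis genuinely propagates positivity of all the intermediate mean curvatures and Newton operators along the whole connected hypersurface $M$ (so that $\hat{\Pp}_{k-1}$ is globally positive semi-definite and Lemma \ref{lemma2} is applicable at every point), and correspondingly that the precise Newton--Maclaurin inequality $H_k/H_{k-1}\le H_k^{1/k}$ is legitimate under exactly the sign conditions we have. These are classical but must be cited carefully; everything else is a direct transcription of the proof of Theorem \ref{Thm2} with $k-1$ in place of $1$.
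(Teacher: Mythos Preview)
Your proposal is correct and follows exactly the paper's approach: the paper gives no separate proof of Theorem \ref{Thm3} beyond declaring it ``just a version of Theorem \ref{Thm2} for higher order mean curvatures,'' and your write-up supplies precisely those details (positive-definiteness of $P_{k-1}$ via the elliptic-point hypothesis and $H_k>0$, Omori--Yau for $\hat{\Ll}_{k-1}$ via Theorem \ref{corollary2}, Lemma \ref{lemma2} with index $k-1$, and the Maclaurin inequality $H_k^{1/k}\le H_{k-1}^{1/(k-1)}$ to control $H_k/H_{k-1}$). Note only that your displayed chain ``$H_k/H_{k-1}\le H_{k-1}^{1/(k-1)}\le H_k^{1/k}$'' has the second inequality pointing the wrong way, but your subsequent line correctly identifies $H_k/H_{k-1}\le H_k^{1/k}\Longleftrightarrow H_k^{1/k}\le H_{k-1}^{1/(k-1)}$, so the argument stands.
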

Likewise, we have the corollary
\begin{corollary}\label{corollary4}
Let $ P^n $ be a complete, non-compact Riemannian manifold whose radial sectional curvature satisfies
\begin{eqnarray}
K_{P}^{rad} \geq - C\cdot  G(r)
\end{eqnarray}
If $ f \colon M^n \to I\times_{\rho}P^n = N^{n+1} $ is a properly immersed hypersurface with $ H_k > 0 $, satisfying \eqref{hipeta}, \eqref{hipcurvmedia},  \eqref{conditionh} and $ \inf \rho > 0 $. If    $ \tr(\hat{\Pp}_{1})\leq \left(\int_{0}^{r\circ \pi_{\Pp}(f)} \frac{ds}{\sqrt{G(s)}} +1\right)$
then
\begin{eqnarray}
\sup_{M}H_{k}^{\frac{1}{k}} \geq \inf_{M}\mathcal{H}(h).
\end{eqnarray}
\end{corollary}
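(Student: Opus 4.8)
The plan is to mirror the argument of Theorem \ref{Thm3}, which is itself the higher-order analogue of Theorem \ref{Thm2}, and simply verify that the curvature hypothesis $K_P^{rad}\geq -C\cdot G(r)$ on $P^n$ provides the Omori-Yau pair $(G,\gamma)$ for the Hessian that Theorem \ref{Thm3} requires. First I would take $\gamma = r$, the distance function on $P^n$ from a fixed pole $x_0$ (smoothed near the cut locus in the standard way, or assuming $P^n$ has empty cut locus as is customary in these statements). The bound $K_P^{rad}\geq -C\,G(r)$ together with $G(0)>0$, $G'\geq 0$, $G^{-1/2}\notin L^1(+\infty)$ feeds into the Hessian comparison theorem exactly as in \cite[Example 1.13]{prs-memoirs} (the same computation invoked in the proof of Corollary \ref{corollary1}) and yields
\begin{eqnarray*}
\Hess_P r \leq C'\sqrt{G(r)}\left(\int_0^r\frac{ds}{\sqrt{G(s)}}+1\right)
\end{eqnarray*}
off a compact set, while $|\grad r|\equiv 1$ trivially satisfies the gradient bound in \eqref{eq1}; after rescaling $G$ by the constant $C'^2$ (which preserves conditions i)--iv) since they are scale-invariant in the relevant sense) we obtain an honest Omori-Yau pair $(G,r)$ for the Hessian on $P^n$ in the sense of Definition \ref{def2}.

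Next I would check the remaining hypotheses of Theorem \ref{Thm3} are met verbatim: $f$ is properly immersed with an elliptic point and $H_k>0$; the second fundamental form satisfies \eqref{hipcurvmedia} with $\tilde\gamma = r$; $\inf\rho>0$; $\mathcal H = \rho'/\rho$ satisfies \eqref{hipeta}; the trace bound $\tr(\hat P_{k-1})\leq \big(\int_0^{r\circ\pi_P(f)}\frac{ds}{\sqrt{G(s)}}+1\big)$ holds (this is the form in which the corollary states it, matching what Theorem \ref{corollary2} needs); and the height function $\sigma\circ h$ satisfies \eqref{conditionh}. With all of these in place, Theorem \ref{Thm3} applies directly and gives
\begin{eqnarray*}
\sup_M H_k^{\frac1k} \geq \inf_M \mathcal H(h),
\end{eqnarray*}
which is the desired conclusion. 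Concretely the deduction inside Theorem \ref{Thm3} runs: Theorem \ref{corollary2} produces an Omori-Yau pair for $\hat{\Ll}_{k-1}$ on $M^n$, Lemma \ref{lemma2} gives $\hat{\Ll}_{k-1}\sigma(h) = c_{k-1}\rho(h)\big(\mathcal H(h)+\Theta H_k/H_{k-1}\big)$, one applies the maximum principle along a sequence $x_j$, bounds $\Theta \geq -1$ and uses the Maclaurin-type inequality $H_k/H_{k-1}\leq H_k^{1/k}$ (valid since there is an elliptic point and $H_k>0$, so $H_1,\dots,H_k>0$), then lets $h(x_j)\to\sup h$ using that $\sigma$ is strictly increasing and $\inf\rho>0$.

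I do not expect a genuine obstacle here, since the corollary is deliberately phrased as the specialization of Theorem \ref{Thm3} to an explicit curvature bound; the only point requiring a word of care is the passage from the sectional curvature bound to the Hessian estimate on $P^n$, i.e. justifying that $K_P^{rad}\geq -C\,G(r)$ with the four conditions on $G$ really does yield the two inequalities of Definition \ref{def2}. This is precisely the content of \cite[Example 1.13]{prs-memoirs} and was already used in Corollary \ref{corollary1}, so I would simply cite it. One should also note the mild abuse whereby $G$ is replaced by a constant multiple of itself to absorb $C$ and $C'$; since all of i)--iv) are stable under $G\mapsto \lambda G$ for $\lambda>0$ (and the definition of $\varphi$, hence condition \eqref{conditionh}, changes only by an additive/multiplicative constant that is harmless for the limit being zero), this causes no difficulty. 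Thus the proof is: reduce to Theorem \ref{Thm3} by constructing the Omori-Yau pair $(G,r)$ on $P^n$ via Hessian comparison, and invoke Theorem \ref{Thm3}.
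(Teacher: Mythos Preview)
Your proposal is correct and matches the paper's own approach: the paper provides no explicit proof of Corollary~\ref{corollary4}, stating only ``Likewise, we have the corollary'' after Theorem~\ref{Thm3}, so the intended argument is precisely the one you give---use the radial curvature bound on $P^n$ together with Hessian comparison (as in the proof of Corollary~\ref{corollary1}) to produce an Omori--Yau pair for the Hessian on $P^n$, and then invoke Theorem~\ref{Thm3}. You have also correctly noted that the trace hypothesis should read $\tr(\hat{\Pp}_{k-1})$ rather than $\tr(\hat{\Pp}_{1})$, and that the elliptic-point assumption of Theorem~\ref{Thm3} is needed though not repeated in the corollary's statement.
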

\section{Slice Theorem}\label{sec3}In this section we  will prove an extension of  the Slice Theorem  proved by L. Alias, D. Impera and M. Rigoli  \cite[Thms. 16 \& 21]{alias-impera-rigoli}. We start with the following lemma.
\begin{lemma}\label{thetalema}
Let $ f \colon M^n \to I\times_{\rho}P^n = N^{n+1} $ be a hypersurface with non-vanishing mean curvature and suppose that the height function $h$ satisfies
\begin{eqnarray}\label{cond.h}
\lim_{x\to \infty}\frac{h(x)}{\varphi(\gamma(x))} = 0,
\end{eqnarray}
where  $ \varphi $ is given  in \eqref{defvarphi} and $\gamma$ in the statement of Theorem \ref{Thm1}. Assume that $ \mathcal{H}' \geq 0 $ and that the angle function $ \Theta $ does not change sign. Choose on $ M^n $ the orientation so that $ H_1 > 0 $. Suppose the Omori-Yau maximum principle for the Laplacian holds on $ M^n $. Then we have that
\begin{enumerate}
\item[i)] If $ \Theta \leq 0 $ then $ \mathcal{H}(h) \geq 0 $,
\item[]
\item[ii)] If $ \Theta \geq 0 $ then $ \mathcal{H}(h) \leq 0 $.
\end{enumerate}
\end{lemma}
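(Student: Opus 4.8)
The plan is to apply the Laplacian Omori--Yau maximum principle to the height function $h$ (or rather to $\sigma\circ h$), using the fact that $\sigma(t)=\int_{t_0}^t\rho(s)\,ds$ is strictly increasing because $\rho>0$, so that a sequence realizing the maximum principle for $h$ does the job for $\sigma\circ h$ too. First I would record the key identity from Lemma \ref{lemma2} in the case $k=0$: since $\hat{\Pp}_0=\Pp_0/H_0=I$, we have $\hat{\Ll}_0=\tr(\hess(\cdot))=\triangle$, and $c_0=n$, so
\begin{eqnarray*}
\triangle\,\sigma(h)=n\,\rho(h)\bigl(\mathcal{H}(h)+\Theta H_1\bigr).
\end{eqnarray*}
This is the engine of the proof: it converts curvature information at a point into an inequality on $\mathcal{H}(h)$.

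Next, to prove (i), assume $\Theta\le 0$ and that the orientation is chosen so that $H_1>0$. Suppose for contradiction that $\inf_M \mathcal{H}(h)=:-\delta<0$; actually it is cleaner to argue that $\sup_M(-\mathcal{H}(h))>0$ leads to a contradiction, but the simplest route is as follows. Apply the weak/Omori--Yau maximum principle for the Laplacian to $u=\sigma\circ h$, which is bounded above by hypothesis \eqref{cond.h} (note $\sigma\circ h /\varphi(\gamma)\to 0$ since $\sigma$ has at most linear growth in $h$ as $\rho$ need not be bounded—here one should be slightly careful, but the hypothesis $\inf\rho>0$ together with $h/\varphi(\gamma)\to0$ and the growth of $\sigma$ gives what is needed, or one works directly with $h$ if $\mathcal H\ge 0$ is what we want to contradict). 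One obtains a sequence $x_j$ with $h(x_j)\to\sup_M h$ and $\triangle(\sigma\circ h)(x_j)<1/j$. Then from the displayed identity,
\begin{eqnarray*}
\frac1j>n\,\rho(h(x_j))\bigl(\mathcal{H}(h(x_j))+\Theta(x_j)H_1(x_j)\bigr)\ge n\,\rho(h(x_j))\,\mathcal{H}(h(x_j)),
\end{eqnarray*}
using $\Theta\le 0$ and $H_1>0$. Since $\inf_M\rho>0$, dividing and letting $j\to\infty$ gives $\liminf_j\mathcal{H}(h(x_j))\le 0$. Now invoke $\mathcal{H}'\ge 0$: $\mathcal{H}$ is nondecreasing as a function on $I$, and $h(x_j)\to\sup_M h=:h^*$, so $\mathcal{H}(h(x_j))\to\mathcal{H}(h^*)=\sup_M\mathcal{H}(h)\le 0$ would force $\mathcal{H}(h)\le 0$ everywhere, not $\ge0$. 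So I have the sign backwards in this naive attempt — the correct move is to apply the maximum principle to $-\sigma\circ h$ (equivalently use the minimum-principle form, which holds since the manifold also satisfies it, or note the Laplacian Omori--Yau principle applied to the bounded-below situation). Applying the principle to get a sequence where $h(x_j)\to\inf_M h$ and $\triangle(\sigma\circ h)(x_j)>-1/j$ yields $-1/j< n\rho(h(x_j))(\mathcal H(h(x_j))+\Theta H_1)\le n\rho(h(x_j))\mathcal H(h(x_j))$, whence $\liminf\mathcal H(h(x_j))\ge 0$, and monotonicity $\mathcal H'\ge0$ with $h(x_j)\to\inf h$ gives $\mathcal H(h)\ge\mathcal H(\inf h)\ge 0$ on all of $M$. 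Part (ii) is entirely symmetric: with $\Theta\ge0$ and $H_1>0$, run the maximum principle upward, $h(x_j)\to\sup h$, get $\mathcal H(h(x_j))+\Theta H_1<1/(n j\rho)$ hence $\limsup\mathcal H(h(x_j))\le 0$, and $\mathcal H'\ge0$ propagates this down to $\mathcal H(h)\le\mathcal H(\sup h)\le0$ everywhere.

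The main obstacle I anticipate is \emph{getting the right direction of the maximum principle paired with the right monotonicity conclusion}, i.e. deciding in each case whether to exploit $h(x_j)\to\sup_M h$ or $h(x_j)\to\inf_M h$ so that $\mathcal H'\ge0$ pushes the pointwise sign information to the whole manifold; the computation with $\triangle\sigma(h)$ and the sign of $\Theta H_1$ is routine once that bookkeeping is fixed. A secondary technical point is verifying that the growth hypothesis \eqref{cond.h} on $h$ transfers to $\sigma\circ h$ so that the Laplacian maximum principle genuinely applies — this needs only that $\sigma$ grows no faster than linearly times the growth of $\rho$, and one controls it using the hypotheses in the ambient theorems; alternatively, when $\mathcal H'\ge 0$ one can phrase everything directly in terms of $h$ rather than $\sigma\circ h$ and avoid the issue. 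I would also note explicitly that $\rho(h(x_j))$ stays bounded away from $0$ by the assumption $\inf\rho>0$, which is what licenses dividing through and passing to the limit.
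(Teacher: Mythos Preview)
Your proposal is correct and follows the same overall strategy as the paper: apply the Omori--Yau principle in the appropriate direction for each case, use the sign of $\Theta H_1$ to pin down the sign of $\mathcal{H}(h(x_j))$ along the sequence, and then invoke $\mathcal{H}'\ge 0$ to propagate the conclusion to all of $M$.

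The one tactical difference is that the paper works with $h$ itself rather than $\sigma\circ h$, using the formula
\[
\Delta h=\mathcal{H}(h)\bigl(n-|\grad h|^2\bigr)+nH_1\Theta.
\]
The factor $(n-|\grad h|^2)$ is controlled by the gradient part of the Omori--Yau sequence ($|\grad h|(x_j)<1/j$), so one can divide through without any lower bound on $\rho$, and the growth hypothesis \eqref{cond.h} applies directly to $h$ with no need to transfer it to $\sigma\circ h$. This is precisely the alternative you mention at the end of your proposal, and it dissolves both technical obstacles you flagged. Note in particular that $\inf\rho>0$ is \emph{not} among the hypotheses of this lemma (it appears in other results of the paper, but not here), so the paper's route via $\Delta h$ is the natural one in this setting.
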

\begin{proof}
By hypothesis, we have that Omori-Yau maximum principle for the Laplacian holds for the height function $ h $, therefore,  there exists a sequences $ \{x_j\} , \{y_j\} \subset M^n $ such that
\begin{eqnarray}\label{a1}
\lim_{j\to +\infty}h(x_j) = \sup h = h^{*} , \ \ \vert \grad h\vert^{2}(x_j) < \left(\frac{1}{j}\right)^2 \ \ \mbox{and} \ \ \Delta h(x_j) < \frac{1}{j}
\end{eqnarray}
and
\begin{eqnarray}\label{a2}
\lim_{j\to +\infty}h(y_j) = \inf h = h_{*} , \ \ \vert \grad h\vert^{2}(y_j) < \left(\frac{1}{j}\right)^2 \ \ \mbox{and} \ \ \Delta h(x_j) > -\frac{1}{j}.
\end{eqnarray}
On the other hand, we know that $ \Delta h = \mathcal{H}(h)(n-\vert \grad h\vert^{2}) + nH_{1}\Theta $. Therefore, supposing that $ \Theta \geq 0 $ we get by \eqref{a1} that
\begin{eqnarray*}
0 \geq -nH_{1}(x_j)\Theta(x_j) > -\frac{1}{j} + \mathcal{H}(h(x_j))(n-\vert \grad h\vert^{2}(x_j))
\end{eqnarray*}
and then, for $ j \gg 1 $ we obtain
\begin{eqnarray*}
0 \geq \frac{-nH_{1}(x_j)\Theta(x_j)}{n-\vert \grad h\vert^{2}(x_j)} > -\frac{1}{j(n-\vert \grad h\vert^{2}(x_j))} + \mathcal{H}(h(x_j)).
\end{eqnarray*}
letting $ j \to +\infty $, we have
\begin{eqnarray}
0 \geq \limsup_{j\to +\infty}  \mathcal{H}(h(x_j)) = \mathcal{H}(h^{*}) \geq \mathcal{H}(h).
\end{eqnarray}
Similar proof gives  the item i).
\end{proof}

Define the operator $\mathcal{L}_{1} = \tr(\mathcal{P}_{1}\circ \hess)= (n-1)\mathcal{H}(h)\Delta - \Theta \Ll_{1}  $ where
$
\mathcal{P}_{1} = (n-1)\mathcal{H}(h)I - \Theta P_1 .
$ Using that
$$\Delta h = \mathcal{H}(h)(n-\vert \grad h\vert^{2}) + nH_{1}\Theta $$ and  $$L_{1}(h)= n(n-1)\left(\mathcal{H}(h)H_1 + \theta H_{2}\right) - \mathcal{H}(h)\langle P_1 \grad h, \grad h\rangle $$ we obtain
$$\begin{array}{rll}
\mathcal{L}_{1}(h)& = &n(n-1)\left(\mathcal{H}^{2}(h) - \Theta^{2}H_{2}\right) - (n-1)\mathcal{H}(h)\langle \mathcal{P}_{1}\grad h,\grad h \rangle \end{array}
$$
$$\begin{array}{rll}
\mathcal{L}_{1}(\sigma \circ h) &= &n(n-1)\rho(h)\left(\mathcal{H}^{2}(h) - \Theta^{2}H_{2}\right)
\end{array}
$$
The following theorem extends \cite[Thm. 16]{alias-impera-rigoli} which extends \cite[Thm 2.9]{alias-dajczer-1}.

\begin{theorem}\label{thm5}
Let $ f : M^n \to I\times_{\rho}P^n = N^{n+1} $ be a complete hypersurface of constant positive 2-mean curvature $ H_2 >0$ with radial sectional curvature $K^{rad}_{M}$ satisfying
\begin{eqnarray}\label{sectionalcurvature2}
K_{M}^{rad} \geq - B^{2}\prod_{j=1}^{\ell}\left[\ln^{(j)}\left(\int_{0}^{r}\frac{ds}{\sqrt{G(s)}} + 1\right)+1\right]^{2}G(r),\,\,\,\,{\rm for} \,\,\,\, r(x)\gg 1,
\end{eqnarray}
 where $G\in C^{\infty}([0,+\infty)) $ is even at the origin and satisfies iv) in Theorem \ref{Thm1}, $r(x)={\rm dist}_{M}(x_{0}, x)$ and  $B\in \mathbb{R}$. Suppose also that height function satisfies the conditions  \eqref{conditionh}, \eqref{cond.h} and that
\begin{eqnarray}\label{cdh1}
\vert H_1 \vert(r) \leq \frac{1}{n(n-1)}\left(\int_{0}^{r}\frac{ds}{\sqrt{G(s)}} + 1\right),
\end{eqnarray}
and
\begin{eqnarray}\label{cdmh}
\vert \mathcal{H}\vert(t) \leq \frac{1}{n(n-1)}\left(\int_{0}^{t}\frac{ds}{\sqrt{G(s)}} + 1\right)
\end{eqnarray}
If $ \mathcal{H}^{'} > 0 $ almost everywhere and the angle function $ \Theta $ does not change sign, then $ f(M^{n}) $ is a slice.
\end{theorem}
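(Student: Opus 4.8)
The plan is to run a two-sided Omori--Yau argument for the operator $\mathcal{L}_1=\tr(\mathcal{P}_1\circ\hess)$ applied to $\sigma\circ h$, and then to squeeze $\mathcal{H}(h)$ between two copies of $\sqrt{H_2}$; this will force $h$ to be constant, which is exactly the statement that $f(M^n)$ is a slice.

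First I would record that, by the radial curvature bound \eqref{sectionalcurvature2} and Corollary \ref{corollary1}, the Omori--Yau maximum principle holds on $M$ for the Laplacian (which is what is needed to invoke Lemma \ref{thetalema}) and, more importantly, for every semi-elliptic trace operator $\tr(P\circ\hess(\cdot))$ whose trace $\tr P$ grows at most like $\int_{0}^{r}G(s)^{-1/2}\,ds+1$. I want this for $\mathcal{L}_1$, with $\mathcal{P}_1=(n-1)\mathcal{H}(h)\,\mathrm{Id}-\Theta P_1$. Since $H_2>0$ the Newton operator $P_1$ is positive definite, and since $\Theta$ does not change sign, Lemma \ref{thetalema} fixes the sign of $\mathcal{H}(h)$. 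Assume $\Theta\le0$, so $\mathcal{H}(h)\ge0$ (the case $\Theta\ge0$, $\mathcal{H}(h)\le0$ is entirely analogous, with $-\mathcal{L}_1$ in place of $\mathcal{L}_1$); then $\mathcal{P}_1\ge0$ and $\mathcal{L}_1$ is semi-elliptic. For the growth of $\tr\mathcal{P}_1=n(n-1)\bigl(\mathcal{H}(h)-\Theta H_1\bigr)$ I would use $|\Theta|\le1$, the bound \eqref{cdh1} on $|H_1|$, and the bound \eqref{cdmh} on $|\mathcal{H}|$, together with the elementary estimate $|h(x)|\le|h(x_0)|+r(x)$ (the projection onto the $I$-factor of a warped product is distance non-increasing and $f$ is an isometric immersion), so that $\int_{0}^{|h(x)|}G^{-1/2}\le\int_{0}^{r(x)}G^{-1/2}+\mathrm{const}$; this brings $\mathcal{L}_1$ within the scope of Corollary \ref{corollary1}.

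Next I would feed successively $u=\sigma\circ h$ and $u=-\sigma\circ h$ into the Omori--Yau principle for $\mathcal{L}_1$; both satisfy the growth condition \eqref{conditionh}. From $u=\sigma\circ h$ one gets $x_j\in M$ with $\sigma\circ h(x_j)\to\sup(\sigma\circ h)$, $|\grad(\sigma\circ h)|(x_j)<1/j$ and $\mathcal{L}_1(\sigma\circ h)(x_j)<1/j$. Since $\inf\rho>0$ the gradient estimate gives $|\grad h|(x_j)\to0$, hence $\Theta^{2}(x_j)=1-|\grad h|^{2}(x_j)\to1$; and since $\sigma$ is a strictly increasing homeomorphism, $h(x_j)\to h^{\ast}:=\sup_M h$. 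Substituting the identity $\mathcal{L}_1(\sigma\circ h)=n(n-1)\rho(h)\bigl(\mathcal{H}^{2}(h)-\Theta^{2}H_2\bigr)$ recorded above, dividing by the positive quantity $n(n-1)\rho(h(x_j))$ and letting $j\to\infty$, yields $\mathcal{H}^{2}(h^{\ast})\le H_2$, hence $\mathcal{H}(h^{\ast})\le\sqrt{H_2}$ (using $\mathcal{H}(h^{\ast})\ge0$ and, crucially, that $H_2$ is a \emph{constant}). The choice $u=-\sigma\circ h$ produces $z_j$ with $h(z_j)\to h_{\ast}:=\inf_M h$, $\Theta^{2}(z_j)\to1$ and $\mathcal{L}_1(\sigma\circ h)(z_j)>-1/j$, which the same computation converts into $\mathcal{H}(h_{\ast})\ge\sqrt{H_2}$. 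As $h_{\ast}\le h^{\ast}$ and $\mathcal{H}'\ge0$ this forces $\sqrt{H_2}\le\mathcal{H}(h_{\ast})\le\mathcal{H}(h^{\ast})\le\sqrt{H_2}$, so $\mathcal{H}(h_{\ast})=\mathcal{H}(h^{\ast})$; since $\mathcal{H}'>0$ almost everywhere the smooth function $\mathcal{H}$ is strictly increasing, so $h_{\ast}=h^{\ast}$ and $h\equiv t_0$ with $\mathcal{H}(t_0)=\sqrt{H_2}$. Then $f(M^n)\subset\{t_0\}\times P^n$, and completeness of $M$ makes the induced immersion into this slice a Riemannian covering, so $f(M^n)$ is that slice.

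The hard part will be the admissibility verification in the second paragraph: that $\mathcal{P}_1$ is positive semi-definite — which is precisely where $H_2>0$ (so $P_1>0$) and Lemma \ref{thetalema} (so $\Theta\,\mathcal{H}(h)\le0$) enter — and that $\tr\mathcal{P}_1$ grows no faster than $\int_0^{r}G^{-1/2}+1$, which obliges one to trade the ambient height $h$ for the intrinsic distance $r$ on $M$ via \eqref{cdh1} and \eqref{cdmh}. Once the Omori--Yau principle for $\mathcal{L}_1$ is in hand, the remainder is routine bookkeeping with the two extremizing sequences and the monotonicity of $\mathcal{H}$. (If $M$ is compact the statement is classical, the ordinary maximum principle at the extrema of $h$ replacing Omori--Yau.)
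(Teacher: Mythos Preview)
Your argument follows the same two-sided Omori--Yau strategy as the paper and is correct in outline, but there is one point of divergence worth flagging. For the infimum you apply the maximum principle to $-\sigma\circ h$; the paper instead applies it to $-h$. Your choice is formally cleaner---the identity $\mathcal{L}_1(\sigma\circ h)=n(n-1)\rho(h)\bigl(\mathcal{H}^2(h)-\Theta^2 H_2\bigr)$ has no gradient term to discard---but it forces you to divide by $\rho(h(z_j))$, and you justify this with ``$\inf\rho>0$'', which is \emph{not} a hypothesis of this theorem. The paper avoids the issue on both ends: for the supremum it observes that $\mathcal{H}(h)\geq 0$ makes $\rho$ non-decreasing along the range of $h$, so $\rho(h(x_j))$ stays bounded below automatically; for the infimum it uses $\mathcal{L}_1 h$, whose expression $n(n-1)\bigl(\mathcal{H}^2(h)-\Theta^2 H_2\bigr)-\mathcal{H}(h)\langle\mathcal{P}_1\grad h,\grad h\rangle$ carries no factor of $\rho$, and the extra gradient term is $\leq 0$ by $\mathcal{H}(h)\geq 0$ and the positive semi-definiteness of $\mathcal{P}_1$. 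Either replace your appeal to $\inf\rho>0$ with the monotonicity argument on the supremum side and switch to $h$ on the infimum side, or add $\inf\rho>0$ as an explicit assumption.

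On the other hand, your treatment of the admissibility check is more explicit than the paper's: the estimate $|h(x)|\leq |h(x_0)|+r(x)$ (via the $1$-Lipschitz projection $\pi_I$ and the isometric immersion) is exactly what is needed to convert the bound \eqref{cdmh} on $|\mathcal{H}|(h)$ into a bound in terms of the intrinsic distance $r$, and the paper passes over this step silently.
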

\begin{proof}
Taking an orientation on $ M^n $ in which $ H_1 > 0 $ we have, by Lemma \ref{thetalema}, that if $ \Theta \leq 0 $, then $ \mathcal{H}(h) \geq $ 0 and therefore the operator $ \mathcal{P}_1 $ is positive semi-definite. Therefore, which implies that $ \mathcal {L}_1 $ is semi-elliptic. Furthermore, by \eqref{cdh1} and \eqref{cdmh}, we have
\begin{eqnarray*}
\tr \mathcal{P}_1 = n(n-1)\left(\mathcal{H}(h) - H_{1}\Theta\right)
&\leq & \left(\int_{0}^{\gamma}\frac{ds}{\sqrt{G(s)}} + 1\right)
\end{eqnarray*}
By Corollary \ref{corollary1} the  Omori-Yau maximum principle for the operator $ \mathcal{L}_1 $   holds on $ M^n $  with the functions $ h $ and $ \sigma \circ h $ (conditions \eqref{conditionh} and \eqref{cond.h}). Thus, there is a sequence $ \{x_{j}\} \subset M^n $ such that
\begin{enumerate}
\item[i.] $\lim_{j\to +\infty} \sigma(h(x_j)) = (\sigma \circ h)^{*}$
\item[ii.]$\vert \grad(\sigma \circ h)\vert(x_j) = \rho(h(x_j))\vert \grad h\vert(x_j) < \frac{1}{j} $
\item[iii.]$\mathcal{L}_{1}(\sigma \circ h)(x_j) < \frac{1}{j}$.
\end{enumerate}
We know that $ \mathcal{H}(h) \geq $ 0, and hence $ \rho'(h) \geq 0 $ and $ \rho $ is increasing. Since $ \sigma $ is strictly increasing, we have $ \displaystyle{\lim_{j \to + \infty}h(x_j) = h^{*}\leq \infty} $. Thus, $ \displaystyle{\lim_{j \to + \infty}\rho(h(x_j)) > c > 0} $. Since $ \Theta^{2} \leq 1 $, the item iii. tells us that
\begin{eqnarray*}
\frac{1}{j} &>&= n(n-1)\rho(h(x_j))\left(\mathcal{H}^{2}(h(x_j)) - \Theta^{2}(x_j)H_{2}\right) \\
&>& n(n-1)\rho(h(x_j))\left(\mathcal{H}^{2}(h(x_j)) - H_{2}\right).
\end{eqnarray*}
Making $ j \to +\infty $, we gets
\begin{eqnarray}\label{eq.h2-1}
\mathcal{H}^{2}(h^{*})\leq H_2 .
\end{eqnarray}

On the other hand, there is a sequence $ \{y_{j}\} \subset M^n $, in such a way that
\begin{enumerate}
\item[i.] $\lim_{j\to +\infty} h(y_j) = h_{*} $
\item[ii.]$\vert \grad h\vert(y_j) = \vert \grad h\vert(y_j) < \frac{1}{j} $
\item[iii.]$\mathcal{L}_{1}(h)(y_j) > -\frac{1}{j}$.
\end{enumerate}
Since $ \mathcal{P}_1 $ is positive semi-definite, there exists a constant $ \beta \geq 0 $ such that $$ \langle \mathcal{P}_{1}\grad h,\grad h \rangle \geq \beta\vert \grad h\vert^{2} .$$ Thus,
\begin{eqnarray*}
 -\frac{1}{j} &<& n(n-1)\left(\mathcal{H}^{2}(h(y_j)) - \Theta^{2}(y_j)H_{2}(y_j)\right) - \mathcal{H}(h(y_j))\langle \mathcal{P}_{1}\grad h(y_j),\grad h(y_j) \rangle \\ && \\
&\leq & n(n-1)\left(\mathcal{H}^{2}(h(y_j)) - \Theta^{2}(y_j)H_{2}(y_j)\right) - \beta \vert \grad h\vert^{2}(y_j) \\&& \\
&\leq & n(n-1)\left(\mathcal{H}^{2}(h(y_j)) - \Theta^{2}(y_j)H_{2}(y_j)\right).
\end{eqnarray*}
Since $ 1 \geq \Theta^{2}(y_j) = 1 - \vert \grad h\vert^{2}(y_j) > 1 - \frac{1}{j} $, by the item ii. we have $ \lim_{j \to +\infty}\Theta^{2}(y_j) = 1 $.
Doing $ j\to +\infty $, we get
\begin{eqnarray}\label{eq.h2-2}
H_2 \leq \mathcal{H}^{2}(h_{*}).
\end{eqnarray}
Combining \eqref{eq.h2-1} with \eqref{eq.h2-2}, we get that
\begin{eqnarray*}
\mathcal{H}(h^{*}) \leq H_2 \leq \mathcal{H}(h_{*})
\end{eqnarray*}
and as $ \mathcal{H} $ is an increasing function, conclude that $ h^{*} = h_{*}< \infty $.

If $ \Theta \geq 0 $, we applied in a manner entirely analogous the Omori-Yau maximum principle to the  semi-elliptic operator $ - \mathcal{L}_{1} $.
\end{proof}

To extend the previous findings in the case of higher order curvatures, let us define for each $ 2\leq k \leq n $, the operators

\begin{eqnarray}
\mathcal{L}_{k-1} = \tr\left(\mathcal{P}_{k-1}\circ \hess\right),
\end{eqnarray}
where
\begin{eqnarray}
\mathcal{P}_{k-1} = \sum_{j=0}^{k-1}(-1)^{j}\frac{c_{k-1}}{c_j}\mathcal{H}^{k-1-j}(h)\Theta^{j}P_{j} .
\end{eqnarray}

Observe that
\begin{eqnarray*}
\mathcal{P}_{k-1} = \frac{c_{k-1}}{c_{k-2}}\mathcal{H}(h)\mathcal{P}_{k-2} + (-1)^{k-1}\Theta^{k-1}P_{k-1}
\end{eqnarray*}
and consequently
\begin{eqnarray}
\mathcal{L}_{k-1} = \frac{c_{k-1}}{c_{k-2}}\mathcal{H}(h)\mathcal{L}_{k-2} + (-1)^{k-1}\Theta^{k-1}L_{k-1} .
\end{eqnarray}

By induction, we see that
\begin{eqnarray}\label{exp.lkh}
\mathcal{L}_{k-1}h = c_{k-1}\left(\mathcal{H}^{k}(h) - (-1)^{k}\Theta^{k}H_{k}\right) - \mathcal{H}(h)\langle \mathcal{P}_{k-1}\grad h,\grad h\rangle
\end{eqnarray}
and
\begin{eqnarray}\label{exp.lksigmah}
\mathcal{L}_{k-1}\sigma(h) = c_{k-1}\rho(h)\left(\mathcal{H}^{k}(h) - (-1)^{k}\Theta^{k}H_{k}\right).
\end{eqnarray}
In fact, assuming that the expression \eqref{exp.lkh} is valid for $ k-2 $, we have:
\begin{eqnarray*}
\mathcal{L}_{k-1}h &=& \frac{c_{k-1}}{c_{k-2}}\mathcal{H}(h)\mathcal{L}_{k-2}h + (-1)^{k-1}\Theta^{k-1}L_{k-1}h \\
&=& \frac{c_{k-1}}{c_{k-2}}\mathcal{H}(h)\left[c_{k-2}\left(\mathcal{H}^{k-1}(h) - (-1)^{k-1}\Theta^{k-1}H_{k-1}\right) - \mathcal{H}(h)\langle \mathcal{P}_{k-2}\grad h,\grad h\rangle \right] + \\
&& + (-1)^{k-1}\Theta^{k-1}\left[\mathcal{H}(h)\left(c_{k-1}H_{k-1} - \langle P_{k-1}\grad h,\grad h\rangle\right) + c_{k-1}\Theta H_{k} \right] \\
&=& c_{k-1}\mathcal{H}^{k}(h) - (-1)^{k-1}c_{k-1}\mathcal{H}(h)\Theta^{k-1}H_{k-1} - \frac{c_{k-1}}{c_{k-2}}\mathcal{H}^{2}(h)\langle \mathcal{P}_{k-2}\grad h,\grad h\rangle + \\
&& + (-1)^{k-1}c_{k-1}\mathcal{H}(h)\Theta^{k-1}H_{k-1} + (-1)^{k-1}c_{k-1}\Theta^{k}H_{k} - \\
&&  - (-1)^{k-1}\mathcal{H}(h)\Theta^{k-1}\langle P_{k-1}\grad h,\grad h\rangle \\
&=& c_{k-1}\left(\mathcal{H}^{k}(h) - (-1)^{k}\Theta^{k}H_{k}\right) - \mathcal{H}(h)\langle \mathcal{P}_{k-1}\grad h,\grad h\rangle .
\end{eqnarray*}

The proof of the expression \eqref{exp.lksigmah} follows similarly. \\
The next theorem extends the Theorem \ref{thm5} for the case of higher order curvatures and your proof is analogous and use only the expressions \eqref{exp.lkh} and \eqref{exp.lksigmah}.

\begin{theorem}\label{thm6}
Let $ f: M^n \to I\times_{\rho}P^n = N^{n+1} $ be a complete hypersurface of constant positive k-mean curvature $ H_k > 0 $, $ 3\leq k \leq n$ with radial sectional curvature $K^{rad}_{M}$ satisfying
\begin{eqnarray}\label{sectionalcurvature3}
K_{M}^{rad} \geq - B^{2}\prod_{j=1}^{\ell}\left[\ln^{(j)}\left(\int_{0}^{r}\frac{ds}{\sqrt{G(s)}} + 1\right)+1\right]^{2}G(r),\,\,\,\,{\rm for} \,\,\,\, r(x)\gg 1,
\end{eqnarray}
 where $G\in C^{\infty}([0,+\infty)) $ is even at the origin and satisfies iv) in Theorem \ref{Thm1}, $r(x)={\rm dist}_{M}(x_{0}, x)$ and  $B\in \mathbb{R}$. Suppose also that height function satisfies the conditions  \eqref{conditionh}, \eqref{cond.h} and that
\begin{eqnarray}\label{cdh1}
\vert H_{1}^{j}\vert \cdot \vert \mathcal{H}^{k-1-j}(h)\vert(r) \leq \frac{1}{kc_{k-1}}\left(\int_{0}^{r}\frac{ds}{\sqrt{G(s)}} + 1\right), \ \ \forall j = 0,...,k-1.
\end{eqnarray}
Assume that there exists an elliptic point in $ M^n$. If $ \mathcal{H}^{'} > 0 $ almost everywhere and the angle function $ \Theta $ does not change sign, then $ f(M^{n}) $ is a slice.
\end{theorem}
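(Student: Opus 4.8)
The plan is to mimic verbatim the argument of Theorem \ref{thm5}, replacing the operator $\mathcal{L}_1$ by $\mathcal{L}_{k-1}$ and the identities for $\mathcal{L}_1 h$, $\mathcal{L}_1(\sigma\circ h)$ by \eqref{exp.lkh} and \eqref{exp.lksigmah}. First I would fix the orientation on $M^n$ so that $H_1>0$; since $M^n$ has an elliptic point and $H_k>0$, the Newton operators $P_1,\dots,P_{k-1}$ are all positive definite (as recalled in Section \ref{sec2}), so each $H_j>0$ for $1\le j\le k$. Next, using Lemma \ref{thetalema} (valid because the Omori--Yau maximum principle for the Laplacian holds on $M^n$ by the curvature bound \eqref{sectionalcurvature3} together with Corollary \ref{corollary1} applied with $\Pp=I$), the sign of $\Theta$ forces a sign on $\mathcal{H}(h)$: if $\Theta\le 0$ then $\mathcal{H}(h)\ge 0$. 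In that case every term $(-1)^j\frac{c_{k-1}}{c_j}\mathcal{H}^{k-1-j}(h)\Theta^j P_j$ in $\mathcal{P}_{k-1}$ is positive semi-definite (the sign $(-1)^j$ cancels against $\Theta^j\le 0$ appropriately), so $\mathcal{P}_{k-1}\ge 0$ and hence $\mathcal{L}_{k-1}$ is semi-elliptic.

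The second step is to check the trace bound needed to invoke Corollary \ref{corollary1}. Expanding $\tr\mathcal{P}_{k-1}=\sum_{j=0}^{k-1}(-1)^j\frac{c_{k-1}}{c_j}\mathcal{H}^{k-1-j}(h)\Theta^j\tr P_j = c_{k-1}\sum_{j=0}^{k-1}(-1)^j\mathcal{H}^{k-1-j}(h)\Theta^j H_j$, and using $|\Theta|\le 1$ together with $H_j^{1/j}\le H_1$ (Newton inequalities, valid at points where everything is elliptic) to bound $|H_j|\le |H_1|^j$, one gets $|\tr\mathcal{P}_{k-1}|\le c_{k-1}\sum_{j=0}^{k-1}|H_1|^j|\mathcal{H}^{k-1-j}(h)|$, which by \eqref{cdh1} is dominated by $\bigl(\int_0^r (G(s))^{-1/2}ds+1\bigr)$ up to the combinatorial constant absorbed into the $k$ terms. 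Thus Corollary \ref{corollary1} applies to $\mathcal{L}_{k-1}$ on $M^n$, and we get sequences $\{x_j\}$ and $\{y_j\}$ realizing the weak maximum principle for $\sigma\circ h$ and for $h$ respectively (using \eqref{conditionh} and \eqref{cond.h} to guarantee these functions satisfy the growth hypothesis \eqref{condictionu}).

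The third step is the two-sided pinching. Along $\{x_j\}$, using \eqref{exp.lksigmah}, $\mathcal{L}_{k-1}(\sigma\circ h)(x_j)=c_{k-1}\rho(h(x_j))\bigl(\mathcal{H}^k(h(x_j))-(-1)^k\Theta^k(x_j)H_k\bigr)<1/j$; since $\mathcal{H}(h)\ge 0$ gives $\rho$ increasing and $h(x_j)\to h^*$, one has $\liminf\rho(h(x_j))>0$, and $|\Theta|\le 1$ yields in the limit $\mathcal{H}^k(h^*)\le H_k$ (being slightly careful with the parity factor $(-1)^k$; when $k$ is odd one should track signs using $\mathcal{H}(h^*)\ge 0$). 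Along $\{y_j\}$, using \eqref{exp.lkh} and the positive semi-definiteness of $\mathcal{P}_{k-1}$ to discard the term $-\mathcal{H}(h)\langle\mathcal{P}_{k-1}\grad h,\grad h\rangle\le 0$, together with $\Theta^2(y_j)=1-|\grad h|^2(y_j)\to 1$, one obtains in the limit $H_k\le\mathcal{H}^k(h_*)$. Combining, $\mathcal{H}^k(h^*)\le H_k\le\mathcal{H}^k(h_*)$; since $\mathcal{H}$ is increasing and $h_*\le h^*$, this forces $h_*=h^*$, i.e. $h$ is constant and $f(M^n)$ is a slice. The case $\Theta\ge 0$ is handled identically with $-\mathcal{L}_{k-1}$, for which $\mathcal{H}(h)\le 0$ by Lemma \ref{thetalema}(ii) makes $-\mathcal{P}_{k-1}$ positive semi-definite.

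\textbf{Main obstacle.} The delicate point is the bookkeeping of the signs coming from $(-1)^j$ and $(-1)^k$ across the parity of $k$: one must verify that $\mathcal{P}_{k-1}$ really is positive semi-definite whenever $\Theta$ and $\mathcal{H}(h)$ have the compatible signs dictated by Lemma \ref{thetalema}, and that the limiting inequalities $\mathcal{H}^k(h^*)\le H_k\le\mathcal{H}^k(h_*)$ — rather than something with a spurious minus sign — are the ones that actually drop out of \eqref{exp.lkh}–\eqref{exp.lksigmah}. This is exactly the kind of routine-but-error-prone sign chase that the induction establishing \eqref{exp.lkh}–\eqref{exp.lksigmah} was set up to make transparent, so in practice it reduces to plugging those two identities into the scheme of Theorem \ref{thm5} and reading off the signs; no genuinely new idea is required.
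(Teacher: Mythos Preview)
Your proposal is correct and follows exactly the route the paper indicates: the paper's own ``proof'' of Theorem \ref{thm6} consists of the single sentence that the argument is analogous to Theorem \ref{thm5} using the identities \eqref{exp.lkh} and \eqref{exp.lksigmah}, and that is precisely what you carry out. The only caveat is the one you already flagged: in the case $\Theta\ge 0$ the operator $(-1)^{k-1}\mathcal{P}_{k-1}$ (not always $-\mathcal{P}_{k-1}$) is the positive semi-definite one, so the parity of $k$ determines whether you apply the maximum principle to $\mathcal{L}_{k-1}$ or to $-\mathcal{L}_{k-1}$; once that bookkeeping is done the pinching $\mathcal{H}^k(h^*)\le H_k\le\mathcal{H}^k(h_*)$ goes through exactly as you describe.
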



\begin{thebibliography}{99}

\bibitem{abresch-Rosenberg} U. Abresch, H.  Rosenberg, {\em A Hopf differential for constant mean curvature surfaces in $ S^{2}\times \mathbb{R}$ and $H^{2}\times \mathbb{R}$.}  Acta Math.  \textbf{193}   141--174, (2004).

\bibitem{albanese-alias-rigoli} G. Albanese, L. Alias, M. Rigoli, {\em Another form of the weak maximum principle and some applications.} Preprint (2012).

\bibitem{alencar-doCarmo-Fernadez-Tribuzy} H. Alencar, M. do Carmo, I.  Fernández, R. Tribuzy, {\em A theorem of H. Hopf and the Cauchy-Riemann inequality. II.} Bull. Braz. Math. Soc. (N.S.) \textbf{38} (2007), no. 4, 525-532.
\bibitem{alencar-doCarmo-Tribuzzy} H. Alencar, M. do Carmo, R. Tribuzy, {\em A theorem of Hopf and the Cauchy-Riemann inequality.} Comm. Anal. Geom. \textbf{15} (2007), no. 2, 283-298.
\bibitem{alencar-doCarmo-Tribuzzy2} H. Alencar, M. do Carmo, R. Tribuzy, {\em A Hopf theorem for ambient spaces of dimension higher than three.} J. Differential Geom. \textbf{84} (2010), no. 1, 1--17.



\bibitem{alias-bessa-dajczer-MathAnn} L. J. Alias, G. P. Bessa, M. Dajczer,  {\em The mean curvature of cylindrically bounded
submanifolds}, Math. Ann. \textbf{345} (2009), 367--376.

\bibitem{alias-bessa-montenegro}L. J.  Al\'ias, G. P. Bessa, J. F. Montenegro, {\em An Estimate for the sectional curvature of cylindrically bounded submanifolds.} Trans. Amer. Math. Soc. \textbf{364} (2012), 3513--3528.

\bibitem{alias-bessa-montenegro-piccione}L. J.  Al\'ias, G. P. Bessa, J. F. Montenegro, P. Piccione {\em Curvature estimates for  submanifolds   in  warped products.} Results Math. \textbf{60} (2011), 265--286.




\bibitem{alias-dajczer-1}L. J. Al\'ias, M. Dajczer, {\em Uniqueness of constant mean curvature surfaces properly immersed in a slab.} Comment. Math. Helv. \textbf{81} (2006), no. 3, 653--663.
\bibitem{alias-dajczer-2}L. J. Al\'ias, M. Dajczer, {\em Constant mean curvature curvature hypersurfaces in warped product spaces} Proc. Edinb. Math. Soc. (2) \textbf{50} (2007), 511--526.

\bibitem{alias-dajczer-3}L. J. Al\'ias, M. Dajczer, {\em A mean curvature estimate for cylindrically bounded submanifolds.}  Pacific J. Math. \textbf{254} (2011), 1--9.
\bibitem{alias-dajczer-rigoli}L. J. Al\'ias, M. Dajczer, M. Rigoli, {\em Higher order mean curvatute estimates for bounded complete hypersurfaces}. Preprint. (2012)

\bibitem{alias-impera-rigoli}L. J. Al\'ias, D. Impera, M. Rigoli, {\em Hypersurfaces of constant higher order mean curvature in warped products.} To appear in Trans. Amer. Math. Soc.













\bibitem{barbosa-colares} J.L. Barbosa, A. G. Colares, {\em Stability of Hypersurfaces with Constant r-Mean
Curvature}, Ann. Glob. An. Geom. \textbf{15} (1997), 277-297.


\bibitem{bessa-costa} G. P. Bessa, S. Costa, {\em On cylindrically bounded $H$-Hypersurfaces of $\mathbb{H}^{n}\times \mathbb{R}$} Differential Geom. Appl. \textbf{26} (2008), no. 3, 323-326.


\bibitem{bjlm-abc} G. P. Bessa, L. Jorge, B. P. Lima, J. F. Montenegro, {\em Fundamental tone estimates for elliptic operators in
divergence form and geometric applications.} An. Acad. Brasil. Ciênc. \textbf{78} (2006), no. 3, 391-404.



\bibitem{bessa-montenegro2}  G. P. Bessa, J. F.  Montenegro, {\em On compact H-Hypersurfaces of $N\times \mathbb{R}$.}
 Geom. Dedicata  \textbf{127}  (2007), 1--5.

\bibitem{bps-revista} G. P. Bessa, S. Pigola, A. G.  Setti, {\em Spectral and sthochastic properties of the f-laplacian,  solutions of PDE's at infinity and geometric applications.}To appear in Rev. Mat. Iberoamericana,  arxiv 1107.1172.
















\bibitem{cheng-yau-math-ann-77} S. Y. Cheng, S. T. Yau,   {\em Hypersurfacces with
Constant Scalar Curvature}. Math Ann 225, (1977) 195--204.











\bibitem{doCarmo-fernandez}M. do Carmo, I. Fernandez, {\em A Hopf theorem for open surfaces in product spaces.} Forum Math. \textbf{21} (2009), no. 6, 951--963.

\bibitem{earp}R.  Earp, B. Nelli, W. Santos, E. Toubiana, {\em  Uniqueness of H-surfaces in $\mathbb{H}^{2}\times \mathbb{R}$, $\vert H\vert \leq  1/2$, with boundary one or two parallel horizontal circles.} Ann. Global Anal. Geom. \textbf{33} (2008), no. 4, 307321.
\bibitem{elbert-rosenberg} M. F. Elbert, H. Rosenberg, {\em Minimal graphs in $N\times \mathbb{R}.$} Ann. Global Anal. Geom. \textbf{34} (2008), no. 1, 39-53.




\bibitem{hartaman-TAMS-78} P. Hartman, {\em On complete hypersurfaces of
nonnegative sectional curvatures and constant m'th mean curvature.} Trans. Amer. Math. Soc. \textbf{245}, (1978), 363-374.


\bibitem{hauswirth} L. Hauswirth, {\em Minimal surfaces of Riemann type in three-dimensional product manifolds.} Pacific J. Math. \textbf{224}  (2006),  no. 1, 91--117.
\bibitem{hong-sung}K. Hong, C. Sung, {\em A generalized Omori-Yau maximum principle and Liouville-type theorems for a second-order linear semi-elliptic operator.}Preprint. arXiv:1111.3456v1.





\bibitem{jorge-xavier-bms}L. M. Jorge, F. Xavier, {\em On the existence of complete bounded minimal surfaces in $\mathbb{R}^{n}$.} Bol. Soc. Brasil. Mat. \textbf{10} (1979), no. 2, 171-173.




\bibitem{hoffman-lira-Rosenberg}D.   Hoffman, J. H. de Lira, H. Rosenberg, {\em Constant mean curvature surfaces in $M\sp 2\times\mathbf R$}.  Trans. Amer. Math. Soc. \textbf{358} (2006), 491--507  .

\bibitem{pessoa-lima1} B. Pessoa Lima, {\em Omori-Yau maximum principle for the operator $L_r$ and its applications}, Doctoral Thesis, Universidade Federal do Cear\'{a}-UFC (2000).



\bibitem{barnabe-leandro}B. Lima, L. Pessoa, {\em On the Omori-Yau maximum principle and geometric applications.} Preprint. arXiv:1201.1675v1.




\bibitem{meeks-rosenberg1} W. Meeks, H. Rosenberg, {\em The theory
of minimal surfaces in $M^{2}\times \mathbb{R}$}. Comment. Math. Helv. \textbf{80}  (2002), 811--858.
\bibitem{montiel-indiana}S. Montiel, {\em Unicity of constant mean curvature hypersurfaces in some Riemannian manifolds.} Indiana Univ. Math. J. \textbf{48} (2006), 711--748.
\bibitem{nelli-rosenberg1} B. Nelli, H. Rosenberg, {\em Minimal surfaces in $\mathbb{H}^{2}\times \mathbb{R}$.} Bull. Braz. Math. Soc. \textbf{33} (2002), 263--292.

\bibitem{nelli-rosenberg2} B. Nelli, H. Rosenberg, {\em Simply connected constant mean curvature surfaces in
 $\mathbb{H}\sp 2\times\mathbb{R}$.}  Michigan Math. J.  \textbf{54}  (2006), 537--543.

\bibitem{nelli-rosenberg3} B. Nelli, H. Rosenberg, {\em Global properties of constant mean curvature surfaces in $\mathbb{H}^{2}\times \mathbb{R}$.} Pacific J. Math. \textbf{226} (2006), 137--152.









\bibitem{omori}  H. Omori, {\em  Isometric immersions of Riemannian manifolds.} J. Math. Soc. Japan
 \textbf{19} (1967), 205--214.

\bibitem{PRS-PAMS} S. Pigola, M. Rigoli and A. G. Setti, \textit{A remark on the maximum
principle and stochastic completeness.} Proc. Amer. Math. Soc. {\bf 131} (2003),  1283-1288.

\bibitem{prs-memoirs} S. Pigola, M. Rigoli, A. G. Setti, {\em Maximum Principles on Riemannian Manifolds and Applications.} Mem.  Amer. Math. Soc. \textbf{822}
\bibitem{prs-revista-2006} S. Pigola, M. Rigoli, A. G. Setti, {\em Some non-linear function theoretic properties of Riemannian maniofolds.} Rev. Mat. Iberoamericana \textbf{22} (2006) 801--831.




\bibitem{reilly-jdg-73} R. Reilly, {\em Variational properties of functions of the mean curvatures
for hypersurfaces in space forms.}  J. Diff. Geom. \textbf{8} (1973), 465--477.


 \bibitem{rosenberg} H. Rosenberg, {\em  Minimal surfaces in ${M}^{2}\times\mathbb{R}$}.  Illinois J. Math. \textbf{46} (2002),  1177--1195.


\bibitem{rosenberg-book} H. Rosenberg, {\em Some recent developments in the theory of minimal surfaces in $3$-manifolds.} Publica\c{c}\~{o}es Matem\'{a}ticas do IMPA. $24^{\underline{o}}$ Col\'{o}quio Brasileiro de Matem\'{a}tica. Instituto de Matem\'{a}tica Pura e Aplicada-IMPA, (2003).


\bibitem{yau-comm-pure-75}S. T. Yau, {\em  Harmonic functions on complete Riemannian manifolds.} Comm. Pure Appl. Math. \textbf{28} (1975), 201-228.
\bibitem{yau2}S. T. Yau, {\em A general Schwarz lemma for K\"{a}hler manifolds.} Amer. J. Math. \textbf{100} (1978),  197--203.
\end{thebibliography}
\end{document}